\newtheorem{propo}{Proposition}[section]
\newtheorem{corol}[propo]{Corollary}
\newtheorem{theor}[propo]{Theorem}
\newtheorem{lemma}[propo]{Lemma}
\theoremstyle{definition}
\newtheorem{defin}[propo]{Definition}
\newtheorem{examp}[propo]{Example}
\theoremstyle{remark}
\newtheorem{remar}[propo]{Remark}
\newtheorem{conve}[propo]{Convention}
\numberwithin{equation}{section}
\newcommand{\al }{\alpha }
\newcommand{\cA }{\mathcal{A}}
\newcommand{\cAp }{\mathcal{A}^+}
\newcommand{\cC }{\mathcal{C}}
\newcommand{\Cm }{C}
\newcommand{\cm }{c}
\newcommand{\DG }{\mathbb{D}}
\newcommand{\GLZ }{\mathrm{GL}(2,\ndZ )}
\newcommand{\id }{\mathrm{id}}
\newcommand{\ndN }{\mathbb{N}}
\newcommand{\NN }{\ndN }
\newcommand{\ndR }{\mathbb{R}}
\newcommand{\ndZ }{\mathbb{Z}}
\newcommand{\ZZ }{\ndZ }
\newcommand{\rfl }{\rho }
\newcommand{\Ob }{\mathrm{Ob}}
\newcommand{\rsC }{\mathcal{R}}
\newcommand{\s }{\sigma }
\newcommand{\SLZ }{\mathrm{SL}(2,\ndZ )}
\newcommand{\tr }{\mathrm{tr}}
\newcommand{\trans }{\mathrm{t}}
\newcommand{\Wg }{\mathcal{W}}
\DeclareMathOperator{\Aut}{Aut}
\DeclareMathOperator{\End}{End}
\DeclareMathOperator{\Hom}{Hom}
\title{Weyl groupoids of rank two and continued fractions}
\author{M.~Cuntz}
\address{Michael Cuntz,
Fachbereich Mathematik,
Universit\"at Kaiserslau\-tern,
Postfach 3049,
D-67653 Kaiserslautern, Germany}
\email{cuntz@mathematik.uni-kl.de}
\author{I.~Heckenberger}
\thanks{I.H. is
supported by the German Research Foundation (DFG) via a Heisenberg
fellowship}
\address{Istv\'an Heckenberger, Mathematisches Institut,
Ludwig-Maximili\-ans-Universit\"at M\"unchen,
Theresienstr. 39,
D-80333 M\"unchen, Germany}
\email{i.heckenberger@googlemail.com}
\begin{document}

\begin{abstract}
  A relationship between
  continued fractions
  and
  Weyl groupoids of Cartan schemes of rank two
  is found. This allows to decide easily if a given
  Cartan scheme of rank two admits a finite root system. We obtain
  obstructions and sharp bounds for the entries of the Cartan matrices.
\end{abstract}

\keywords{Cartan matrix, continued fraction, Nichols algebra, Weyl groupoid}
\subjclass[2000]{20F55;11A55,16W30}

\maketitle

\section{Introduction}

Root systems and crystallographic Coxeter groups
appear to be main tools in the study of semisimple Lie algebras
\cite{b-BourLie4-6}. A similar role is expected to be played by Weyl groupoids
and their root systems in the structure theory of pointed Hopf algebras
\cite{b-Montg93}.
The most striking results on pointed Hopf algebras rely on the
Lifting method of Andruskiewitsch and Schneider \cite{a-AndrSchn98}.
Based on it, many new examples of finite-dimensional pointed Hopf algebras
have been detected, and fairly general classification results were achieved
\cite{a-AndrSchn05p}, \cite{p-Heck06b}.
The first step in the Lifting method is the determination of
finite-dimensional Nichols algebras of finite group type.
The upper triangular part of a small quantum group, also called
Frobenius-Lusztig kernel, is a prominent example.
A very natural symmetry object of Nichols algebras of finite group type
is the Weyl groupoid.
This was observed first in \cite{a-Heck06a} for Nichols algebras of diagonal
type, and then in \cite{p-AHS08} in a very general setting. An axiomatic
approach to Weyl groupoids and their root systems, without referring to
Nichols algebras, was initiated in \cite{a-HeckYam08}. The theory includes and
extends the theory of crystallographic Coxeter groups, but contains even such
examples which do not seem to be related to Nichols algebras of diagonal type.
In this paper we use the language and some structural and
classification results achieved in \cite{p-CH08}, see Sect.~\ref{sec:CS} for
the most essential definitions and facts.

For the classification of Nichols algebras of diagonal type it is
crucial to be able to decide,
if a given Cartan scheme (a categorical generalization of the notion
of a generalized Cartan matrix, see Def.~\ref{de:CS}) admits a finite root
system.
Because of the large variety of examples, this seems to be a difficult task.
In our paper, we present a very efficient method for Cartan schemes of
rank two. It relies on a relationship between Cartan schemes of rank two and
continued fractions \cite{b-Perron29}. Instead of giving a complete list of
Cartan schemes of rank two admitting a finite root system (which is then
unique by a result in \cite{p-CH08}),
we present with Thm.~\ref{th:cycleclass} an algorithm. It works with
very elementary operations on sequences of positive integers, and transforms
any Cartan scheme into another one, for which the answer is known.
The algorithm is based on various observations: on the introduction and study
of coverings of Cartan schemes in Sect.~\ref{sec:cover}, on an old theorem
of Stern, Pringsheim, and Tietze,
and a variation of a transformation formula for continued fractions,
see Sect.~\ref{sec:cofrac} and Lemma~\ref{le:cApcontr}, 
on the characterization of simple connected Cartan schemes admitting a finite
root system in terms of certain sequences of positive integers, see
Prop.~\ref{pr:rstoc} and Thm.~\ref{th:ctors},
and on the description of Cartan schemes with object change diagram a cycle
using characteristic sequences, see Def.~\ref{de:centsym}.
As an application, in Sect.~\ref{sec:bounds}
we give obstructions for the entries of the Cartan matrices in a Cartan scheme
admitting a finite root system. We present the power of our method on a small
example at the end of Sect.~\ref{sec:crs2}.

We are confident that a suitable generalization
of our method to Cartan schemes and
Weyl groupoids of higher rank would have a deep impact on the classification
of Nichols algebras, and consider it as a great challenge for the future.

\section{Cartan schemes, root systems, and their Weyl groupoids}
\label{sec:CS}

If not stated otherwise, we follow the notation in \cite{p-CH08}.
Let us start by recalling the main definitions.

Let $I$ be a non-empty finite set and
$\{\al _i\,|\,i\in I\}$ the standard basis of $\ndZ ^I$.
By \cite[\S 1.1]{b-Kac90} a generalized Cartan matrix
$\Cm =(\cm _{ij})_{i,j\in I}$
is a matrix in $\ndZ ^{I\times I}$ such that
\begin{enumerate}
  \item[(M1)] $\cm _{ii}=2$ and $\cm _{jk}\le 0$ for all $i,j,k\in I$ with
    $j\not=k$,
  \item[(M2)] if $i,j\in I$ and $\cm _{ij}=0$, then $\cm _{ji}=0$.
\end{enumerate}

\begin{defin} \label{de:CS}
  Let $A$ be a non-empty set, $\rfl _i : A \to A$ a map for all $i\in I$,
  and $\Cm ^a=(\cm ^a_{jk})_{j,k \in I}$ a generalized Cartan matrix
  in $\ndZ ^{I \times I}$ for all $a\in A$. The quadruple
  \[ \cC = \cC (I,A,(\rfl _i)_{i \in I}, (\Cm ^a)_{a \in A})\]
  is called a \textit{Cartan scheme} if
  \begin{enumerate}
  \item[(C1)] $\rfl _i^2 = \id$ for all $i \in I$,
  \item[(C2)] $\cm ^a_{ij} = \cm ^{\rfl _i(a)}_{ij}$ for all $a\in A$ and
    $i,j\in I$.
  \end{enumerate}
  Two Cartan schemes $\cC =\cC (I,A,(\rfl _i)_{i\in I},(\Cm ^a)_{a\in A})$
  and $\cC '=\cC '(I',A',$
  $(\rfl '_i)_{i\in I'},({\Cm '}^a)_{a\in A'})$
  are termed
  \textit{equivalent}, if there are bijections $\varphi _0:I\to I'$
  and $\varphi _1:A\to A'$ such that
  \begin{align}\label{eq:equivCS}
    \varphi _1(\rfl _i(a))=\rfl '_{\varphi _0(i)}(\varphi _1(a)),
    \qquad
    \cm ^{\varphi _1(a)}_{\varphi _0(i) \varphi _0(j)}=\cm ^a_{i j}
  \end{align}
  for all $i,j\in I$ and $a\in A$.

  Let $\cC = \cC (I,A,(\rfl _i)_{i \in I}, (\Cm ^a)_{a \in A})$ be a
  Cartan scheme. For all $i \in I$ and $a \in A$ define $\s _i^a \in
  \Aut(\ndZ ^I)$ by
  \begin{align}
    \s _i^a (\al _j) = \al _j - \cm _{ij}^a \al _i \qquad
    \text{for all $j \in I$.}
    \label{eq:sia}
  \end{align}
  The \textit{Weyl groupoid of} $\cC $
  is the category $\Wg (\cC )$ such that $\Ob (\Wg (\cC ))=A$ and
  the morphisms are generated by the maps
  $\s _i^a\in \Hom (a,\rfl _i(a))$ with $i\in I$, $a\in A$.
  Formally, for $a,b\in A$ the set $\Hom (a,b)$ consists of the triples
  $(b,f,a)$, where
  \[ f=\s _{i_n}^{\rfl _{i_{n-1}}\cdots \rfl _{i_1}(a)}\cdots
    \s _{i_2}^{\rfl _{i_1}(a)}\s _{i_1}^a \]
  and $b=\rfl _{i_n}\cdots \rfl _{i_2}\rfl _{i_1}(a)$ for some
  $n\in \ndN _0$ and $i_1,\ldots ,i_n\in I$.
  The composition is induced by the group structure of $\Aut (\ndZ ^I)$:
  \[ (a_3,f_2,a_2)\circ (a_2,f_1,a_1) = (a_3,f_2f_1, a_1)\]
  for all $(a_3,f_2,a_2),(a_2,f_1,a_1)\in \Hom (\Wg (\cC ))$.
  By abuse of notation we will write
  $f\in \Hom (a,b)$ instead of $(b,f,a)\in \Hom (a,b)$.
 
  The cardinality of $I$ is termed the \textit{rank of} $\Wg (\cC )$.
  A Cartan scheme is called \textit{connected} if its Weyl groupoid
  is connected, that is, if for all $a,b\in A$ there exists $w\in \Hom (a,b)$.
\end{defin}

In many cases it will be natural to assume that a Cartan scheme satisfies the
following additional property.
\begin{itemize}
  \item [(C3)] If $a,b\in A$ and $(b,\id ,a)\in \Hom (a,b)$, then $a=b$.
\end{itemize}

\begin{defin} \label{de:RSC}
  Let $\cC =\cC (I,A,(\rfl _i)_{i\in I},(\Cm ^a)_{a\in A})$ be a Cartan
  scheme. For all $a\in A$ let $R^a\subset \ndZ ^I$, and define
  $m_{i,j}^a= |R^a \cap (\ndN_0 \al _i + \ndN_0 \al _j)|$ for all $i,j\in
  I$ and $a\in A$. We say that
  \[ \rsC = \rsC (\cC , (R^a)_{a\in A}) \]
  is a \textit{root system of type} $\cC $, if it satisfies the following
  axioms.
  \begin{enumerate}
    \item[(R1)]
      $R^a=R^a_+\cup - R^a_+$, where $R^a_+=R^a\cap \ndN_0^I$, for all
      $a\in A$.
    \item[(R2)]
      $R^a\cap \ndZ\al _i=\{\al _i,-\al _i\}$ for all $i\in I$, $a\in A$.
    \item[(R3)]
      $\s _i^a(R^a) = R^{\rfl _i(a)}$ for all $i\in I$, $a\in A$.
    \item[(R4)]
      If $i,j\in I$ and $a\in A$ such that $i\not=j$ and $m_{i,j}^a$ is
      finite, then
      $(\rfl _i\rfl _j)^{m_{i,j}^a}(a)=a$.
  \end{enumerate}
  If $\rsC $ is a root system of type $\cC $, then we say that
  $\Wg (\rsC )=\Wg (\cC )$ is the \textit{Weyl groupoid of} $\rsC $.
  Further, $\rsC $ is called \textit{connected}, if $\cC $ is a connected
  Cartan scheme.
  If $\rsC =\rsC (\cC ,(R^a)_{a\in A})$ is a root system of type $\cC $
  and $\rsC '=\rsC '(\cC ',({R'}^a)_{a\in A'})$ is a root system of
  type $\cC '$, then we say that $\rsC $ and $\rsC '$ are \textit{equivalent},
  if $\cC $ and $\cC '$ are equivalent Cartan schemes given by maps $\varphi
  _0:I\to I'$, $\varphi _1:A\to A'$ as in Def.~\ref{de:CS}, and if
  the map $\varphi _0^*:\ndZ^I\to \ndZ^{I'}$ given by
  $\varphi _0^*(\al _i)=\al _{\varphi _0(i)}$ satisfies
  $\varphi _0^*(R^a)={R'}^{\varphi _1(a)}$ for all $a\in A$.
\end{defin}

There exist many interesting examples of root systems of type $\cC $ related
to semisimple Lie algebras, Lie superalgebras and Nichols algebras of diagonal
type, respectively. For further details and results we refer to
\cite{a-HeckYam08} and \cite{p-CH08}.

\begin{conve}\label{con:uind}
  In connection with Cartan schemes, upper indices usually refer to elements
  of $A$. Often, these indices will be omitted if they are uniquely determined
  by the context.
\end{conve}

\begin{remar}
  If $\cC $ is a Cartan scheme and there exists a root system of type $\cC $,
  then $\cC $ satisfies (C3) by \cite[Lemma\,8(iii)]{a-HeckYam08}.
\end{remar}

In \cite[Def.\,4.3]{p-CH08} the concept of an \textit{irreducible}
root system of type
$\cC $ was defined. By \cite[Prop.\,4.6]{p-CH08}, if $\cC $ is a connected
Cartan scheme and $\rsC $ is a finite root system of type $\cC $, then $\rsC $
is irreducible if and only if the generalized Cartan matrix $C^a$ is
indecomposable for one (equivalently, for all) $a\in A$.

A fundamental result about Weyl groupoids is the following theorem.

\begin{theor}\cite[Thm.\,1]{a-HeckYam08}\label{th:Coxgr}
  Let $\cC =\cC (I,A,(\rfl _i)_{i\in I},(\Cm ^a)_{a\in A})$
  be a Cartan scheme and $\rsC =\rsC (\cC ,(R^a)_{a\in A})$ a root system
  of type $\cC $.
  Let $\Wg $ be the abstract
  groupoid with $\Ob (\Wg )=A$ such that $\Hom (\Wg )$ is
  generated by abstract morphisms $s_i^a\in \Hom (a,\rfl _i(a))$,
  where $i\in I$ and $a\in A$, satisfying the relations
  \begin{align*}
    s_i s_i 1_a=1_a,\quad (s_j s_k)^{m_{j,k}^a}1_a=1_a,
    \qquad a\in A,\,i,j,k\in I,\, j\not=k,
  \end{align*}
  see Conv.~\ref{con:uind}.
  Here $1_a$ is the identity of the object $a$,
  and $(s_j s_k)^\infty 1_a$ is understood to be
  $1_a$. The functor $\Wg \to \Wg (\rsC )$, which is
  the identity on the objects, and on the set of
  morphisms is given by
  $s _i^a\mapsto \s_i^a$ for all $i\in I$, $a\in A$,
  is an isomorphism of groupoids.
\end{theor}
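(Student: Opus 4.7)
The plan is a two-stage argument. First, construct a functor $F\colon \Wg \to \Wg (\rsC )$ by sending each generator $s_i^a$ to $\s _i^a$; this requires verifying that the quadratic and braid relations of $\Wg $ are already satisfied by the $\s _i^a$ in $\Wg (\rsC )$. Second, establish that $F$ is faithful via a Matsumoto-type exchange argument adapted to the groupoid setting. Surjectivity of $F$ on both objects and morphisms is immediate from the construction of $\Wg (\rsC )$.

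The quadratic relation $\s _i^{\rfl _i(a)}\s _i^a=\id _a$ is a direct consequence of the reflection formula~\eqref{eq:sia} combined with (C1) and (C2): axiom (C2) ensures that the Cartan entries $\cm _{ij}^a$ used in both copies of the reflection agree, so the composite acts trivially on every $\al _j$. For the braid relation $(\s _j\s _k)^{m_{j,k}^a}1_a=1_a$ with $m_{j,k}^a$ finite, (R4) shows that the composite lies in $\Hom (a,a)$; it is then enough to prove that it acts as the identity of $\ZZ ^I$. Inside the rank-two sublattice $\ZZ \al _j+\ZZ \al _k$, axioms (R1)--(R3) force the positive roots with support in $\{j,k\}$ to form a cyclically ordered set of size $m_{j,k}^a$ on which $\s _j\s _k$ acts by a cyclic shift, so the $m_{j,k}^a$-th power is trivial there. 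For the remaining simple roots $\al _\ell $, $\ell \notin \{j,k\}$, I would use (R2) and the fact that $\al _\ell $ is the unique positive root of the form $\al _\ell +c_j\al _j+c_k\al _k$ fixed by the rank-two rotation above. Hence $F$ is a well-defined functor.

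For faithfulness, I would follow Matsumoto's strategy. Given $w\in \Hom _{\Wg (\rsC )}(a,b)$, denote by $\ell (w)$ the minimal number of factors $\s _i$ in any expression for $w$, and define the inversion set
\[ \Phi (w)=\{\beta \in R^a_+\mid w(\beta )\in -R^b_+\}. \]
Axioms (R1)--(R3) imply by induction on $\ell (w)$ that $|\Phi (w)|=\ell (w)$, and from this one extracts the strong exchange condition: if $\ell (\s _i^bw)\leq \ell (w)$ and $w=\s _{i_n}^{\cdots }\cdots \s _{i_1}^a$ is a reduced expression, then there exists $t$ with
\[ \s _i^bw=\s _{i_n}^{\cdots }\cdots \widehat {\s _{i_t}^{\cdots }}\cdots \s _{i_1}^a, \]
and the rearrangement realising this equality is performed using only the defining relations of $\Wg $. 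Iterating the exchange condition, any two reduced expressions for the same morphism are connected modulo those relations, which is exactly injectivity of $F$; together with surjectivity this yields the asserted isomorphism.

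The main obstacle is the exchange step, because the base object, the specific reflection, and the positive cone $R^a_+$ all change at every letter, so the classical Weyl-group argument has to be recast with careful object-tracking. Establishing the length formula $|\Phi (w)|=\ell (w)$ requires watching how $\Phi $ transforms under left multiplication by a single $\s _i^b$, and the rearrangement step in the exchange proof ultimately reduces to the rank-two braid relations verified in the first stage.
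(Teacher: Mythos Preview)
The paper does not supply a proof of this theorem: it is quoted verbatim from \cite{a-HeckYam08} (as indicated in the theorem header) and used as a black box throughout. There is therefore no ``paper's own proof'' against which your proposal can be compared.

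That said, your outline is essentially the strategy carried out in \cite{a-HeckYam08}: first check that the defining relations of the abstract Coxeter groupoid hold in $\Wg (\rsC )$ (so the functor exists), then build a length function via inversion sets $\Phi (w)$, prove $|\Phi (w)|=\ell (w)$, and deduce an exchange condition from which a Matsumoto-type statement (any two reduced words for the same morphism differ by the defining relations) follows. The object-tracking you flag as the main obstacle is indeed the technical heart of that paper; your sketch identifies the right ingredients but, as you acknowledge, the exchange step and the verification that the braid relation acts trivially on $\al _\ell$ for $\ell \notin \{j,k\}$ require more care than a one-line appeal to uniqueness of a fixed root.
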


\begin{defin}
  Let $\cC =\cC (I,A,(\rfl _i)_{i\in I},(\Cm ^a)_{a\in A})$ be a Cartan
  scheme.
  Let $\Gamma $ be a nondirected graph,
  such that the vertices of $\Gamma $ correspond to the elements of $A$.
  Assume that for all $i\in I$ and $a\in A$ with $\rfl _i(a)\not=a$
  there is precisely one edge between the vertices $a$ and $\rfl _i(a)$
  with label $i$,
  and all edges of $\Gamma $ are given in this way.
  The graph $\Gamma $ is called the \textit{object change diagram} of $\cC $.
  If $\rsC =\rsC (\cC ,(R^a)_{a\in A})$ is a root system of type $\cC $, then
  we also say that $\Gamma $ is the object change diagram of $\rsC $.
\end{defin}

\section{Coverings of Cartan schemes, Weyl groupoids, and root systems}
\label{sec:cover}

Two Cartan schemes can be related to each other in different ways.
In this section we analyze coverings of Cartan schemes. The definition
is motivated by the corresponding notion in topology.

\begin{defin} \label{de:cover}
  Let $\cC =\cC (I,A,(\rfl _i)_{i\in I},(C^a)_{a\in A})$ and
  $\cC' =\cC' (I,A'$, $(\rfl '_i)_{i\in I},(C'^a)_{a\in A'})$ be
  connected Cartan schemes.
  Let $\pi :A'\to A$ be a map such that $C^{\pi (a)}=C'^a$
  for all $a\in A'$ and the diagrams
  \begin{align}
    \begin{CD}
      A' & @>\rfl '_i>> & A' \\
      @V\pi VV & & @VV\pi V\\
      A & @>>\rfl _i > & A
    \end{CD}
    \label{eq:pirfl}
  \end{align}
  commute for all $i\in I$. We say that $\pi :\cC '\to \cC $
  is a \textit{covering}, and that $\cC '$ is a \textit{covering of} $\cC $.
\end{defin}

The composition of two coverings is
again a covering.
Any covering $\pi :\cC '\to \cC $ of Cartan schemes $\cC ',\cC $ is
surjective by \eqref{eq:pirfl},
since $A'$ is non-empty and $\cC $ is connected.

\begin{remar}
  Many of the following results can be formulated without assuming that $\cC $
  and/or $\cC '$ in Def.~\ref{de:cover} are connected Cartan schemes.
  In that case one should assume
  that $\pi $ is a surjective map. However, in 
  the applications we are interested in, all Cartan schemes are connected, and
  hence we prefer the above definition in order to simplify the terminology.
\end{remar}

Any covering $\pi :\cC '\to \cC $ of Cartan schemes $\cC ',\cC $
induces a covariant
functor $F_\pi :\Wg (\cC ')\to \Wg (\cC )$
by letting
\[ F_\pi (a')=\pi (a'), \quad F_\pi (\s _i^{a'})=\s _i^{\pi (a')}\qquad
\text{for all $i\in I$, $a'\in A'$.} \]
In this case the Weyl groupoid $\Wg (\cC ')$ is termed a \textit{covering of}
$\Wg (\cC)$, and the functor $F_\pi $ a covering of Weyl groupoids.

First we need a technical result.

\begin{lemma}
  Let $\pi :\cC '\to \cC $ be a covering, and assume that $\cC '$ satisfies
  Axiom~(C3). Then the following hold:

  (1) $\cC $ satisfies (C3).

  (2) Let $a\in A$ and $a',a''\in A'$ such that $\pi (a')=\pi (a'')=a$.
  If $w'\in \Hom (a',a'')$ such that $F_\pi (w')\in F_\pi (\End (a'))$,
  then $a'=a''$.
  \label{le:C3}
\end{lemma}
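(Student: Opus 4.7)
My plan rests on a single compatibility observation: since the covering condition gives $C^{\pi(a')}=C'^{a'}$ for all $a'\in A'$, formula~\eqref{eq:sia} forces $\s_i^{a'}$ and $\s_i^{\pi(a')}$ to be literally the same element of $\Aut(\ndZ^I)$. Consequently, if a morphism $w'\in \Hom(a',b')$ of $\Wg(\cC')$ is represented by the triple $(b',f,a')$ with $f\in \Aut(\ndZ^I)$, then $F_\pi(w')=(\pi(b'),f,\pi(a'))$; that is, $F_\pi$ alters only the endpoints, not the underlying automorphism.

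With this in hand, part~(1) is straightforward. Given $a,b\in A$ and $(b,\id,a)\in \Hom(a,b)$, I would write this morphism as a word $\s_{i_n}^{\rfl_{i_{n-1}}\cdots \rfl_{i_1}(a)}\cdots \s_{i_1}^a$ with $b=\rfl_{i_n}\cdots \rfl_{i_1}(a)$ for some $i_1,\ldots,i_n\in I$. Using surjectivity of $\pi$ (noted after Def.~\ref{de:cover}), pick any $a'\in \pi^{-1}(a)$, set $b':=\rfl'_{i_n}\cdots \rfl'_{i_1}(a')$, and form the analogous word $w'\in \Hom(a',b')$ over $\cC'$. The commutative squares~\eqref{eq:pirfl} yield $\pi(b')=b$, while the observation above guarantees that the underlying automorphism of $w'$ is $\id$. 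Axiom~(C3) for $\cC'$ then gives $a'=b'$, and applying $\pi$ delivers $a=b$.

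For part~(2), the hypothesis supplies some $v'\in \End(a')$ with $F_\pi(v')=F_\pi(w')$. I would then form $u':=w'\circ (v')^{-1}\in \Hom(a',a'')$. By functoriality, $F_\pi(u')=F_\pi(w')F_\pi(v')^{-1}=\id \in \End(a)$, and by the same observation this forces the underlying automorphism of $u'$ itself to be $\id$, so that $u'=(a'',\id,a')$. Axiom~(C3) for $\cC'$ then concludes $a'=a''$.

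The only delicate point is the initial compatibility observation identifying $F_\pi$ with the identity on underlying automorphisms; once that is isolated, both parts collapse to direct applications of (C3) for $\cC'$, and I do not anticipate any substantive obstacle.
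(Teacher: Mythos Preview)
Your proposal is correct and follows essentially the same approach as the paper: the key observation that $F_\pi$ preserves the underlying automorphism in $\Aut(\ndZ^I)$ is exactly what the paper uses, and both parts then reduce to a direct application of (C3) for $\cC'$ after lifting a suitable word (part~(1)) or forming the composite $w'(v')^{-1}$ (part~(2)). The only difference is presentational: you isolate the compatibility observation explicitly at the outset, whereas the paper invokes it inline.
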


\begin{proof}
  (1) Let $a\in A$.
  If $k\in \ndN _0$ and $i_1,\ldots ,i_k\in I$, then Def.~\ref{de:cover}
  gives that
  $\s _{i_1}\cdots \s _{i_{k-1}}\s _{i_k}^a=
  \s _{i_1}\cdots \s _{i_{k-1}}\s _{i_k}^{a'}$ in $\Aut (\ndZ ^I)$
  for all $a'\in A'$ with $\pi (a')=a$. Assume now that
  $\s _{i_1}\cdots \s _{i_{k-1}}\s _{i_k}^a=\id $. Then
  $\rfl '_{i_1}\cdots \rfl '_{i_k}(a')=a'$ for all $a'\in A'$ with $\pi
  (a')=a$, since $\cC '$ satisfies (C3).
  Hence $\rfl _{i_1}\cdots \rfl _{i_k}(a)=a$ by Eq.~\eqref{eq:pirfl}.
  This yields the claim.

  (2)
  Let $w''\in \End (a')$ with $F_\pi (w'')=F_\pi (w')$. Then $F_\pi
  (w'w''^{-1})=\id _a$, and hence $w'w''^{-1}=\id $ in $\Aut (\ndZ ^I)$.
  Since $\cC '$ satisfies (C3), it follows that $w'w''^{-1}=\id _{a'}$, and
  hence $a'=a''$.
\end{proof}

Let $\cC =\cC (I,A,(\rfl _i)_{i\in I},(C^a)_{a\in A})$
be a connected Cartan-scheme, $\Wg (\cC )$ its Weyl groupoid, and $a\in A$.
Coverings of $\cC $ can be parametrized by
subgroups of $\End (a)\subset \Hom (\Wg (\cC ))$ (up to conjugation).

\begin{propo}\label{pr:cover}
  (1) Let $\cC '$ be a connected Cartan scheme
  and assume that $\pi :\cC '\to \cC $ is a covering.
  Let $a'\in A'$ with $\pi (a')=a$.
  \begin{itemize}
    \item[(1.A)]
      The group homomorphism $F_\pi :\End (a')\to \End (a)$ is injective.
    \item[(1.B)]
      For each $b'\in A'$ with $\pi (b')=a$ the subgroup $F_\pi (\End
      (b'))$ of $\End (a)$ is conjugate to $F_\pi (\End (a'))$.
    \item[(1.C)]
      If $U'$ is a subgroup of $\End (a)$ conjugate to $F_\pi (\End (a'))$,
      then there exists
      $b'\in A'$ with $\pi (b')=a$ and $F_\pi (\End (b'))=U'$.
  \end{itemize}

  (2) Suppose that $U\subset \End (a)$ is a subgroup.
  Then there exists a covering $\pi :\cC '\to \cC $
  and $b'\in A'$ such that
  \begin{align}
    F_\pi (\End (b'))=&U,
    \label{eq:piprop1}\\
    |\pi ^{-1}(b)|=&[\End (a):U] \quad \text{for all $b\in A$.}
    \label{eq:piprop2}
  \end{align}
  If $\cC $ satisfies Axiom~(C3),
  then up to equivalence there is a unique covering $\cC '$
  satisfying Eq.~\eqref{eq:piprop1} 
  and Axiom~(C3). For this covering Eq.~\eqref{eq:piprop2} holds.
\end{propo}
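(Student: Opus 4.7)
The plan is to address Part (1) by direct inspection of morphisms as triples $(b,f,a)$, and Part (2) by an explicit ``covering space'' construction indexed by cosets of $U$, with uniqueness extracted from a path-lifting argument that uses (C3) crucially. For (1.A), morphisms in $\Wg(\cC)$ are by definition triples $(b,f,a)$ with $f\in\Aut(\ndZ^I)$, and $F_\pi$ fixes the underlying $f$-part because $C^{\pi(a')}=C'^{a'}$ forces $\s_i^{\pi(a')}=\s_i^{a'}$ as reflections of $\ndZ^I$; restricted to $\End(a')\to\End(a)$ the endpoints are already determined, so $F_\pi$ is injective. For (1.B), connectedness of $\cC'$ yields some $w'\in\Hom(a',b')$; then $\End(b')=w'\End(a')(w')^{-1}$, and applying $F_\pi$ expresses $F_\pi(\End(b'))$ as a conjugate of $F_\pi(\End(a'))$ inside $\End(a)$ by $F_\pi(w')$. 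For (1.C), given $v\in\End(a)$ with $U'=vF_\pi(\End(a'))v^{-1}$, write $v$ as a word $\s_{i_n}^{a_{n-1}}\cdots\s_{i_1}^a$ along a closed path at $a$, and lift the word step-by-step from $a'$ using the $\rfl'_i$'s; the endpoint $b':=\rfl'_{i_n}\cdots\rfl'_{i_1}(a')$ satisfies $\pi(b')=a$, the lift $v'\in\Hom(a',b')$ has $F_\pi(v')=v$, and (1.B) then yields $F_\pi(\End(b'))=U'$.

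For the existence half of Part (2), I will mimic the universal cover by right cosets. Set
\[ A':=\{wU\mid w\in\Hom(a,b)\text{ for some }b\in A\}, \qquad wU:=\{w\circ u\mid u\in U\}, \]
with $\pi(wU):=b$ when $w\in\Hom(a,b)$, $C'^{wU}:=C^{\pi(wU)}$, and $\rfl'_i(wU):=(\s_i^{\pi(wU)}\circ w)U$. Well-definedness on cosets is immediate, (C1) and (C2) follow from their counterparts in $\cC$, and $\pi$ is a covering by construction. Taking $b':=1_aU$, every $wU$ is reached from $b'$ by lifting a word expression of $w$, so $\cC'$ is connected; an endomorphism of $b'$ corresponds to a word whose lift returns to $b'$, which occurs precisely when its image in $\Wg(\cC)$ lies in $U$, giving $F_\pi(\End(b'))=U$; and for each $b\in A$ the map $\End(a)/U\to\pi^{-1}(b)$, $uU\mapsto(w_0\circ u)U$ for any fixed $w_0\in\Hom(a,b)$, is a bijection, proving the fiber-size formula. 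The same $\cC'$ satisfies (C3) whenever $\cC$ does, since a word whose underlying element of $\Aut(\ndZ^I)$ is the identity preserves every coset.

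For uniqueness, assume $\cC$ satisfies (C3) and let $\tilde\pi:\tilde\cC\to\cC$ be another covering, also satisfying (C3), with $\tilde b\in\tilde A$, $\tilde\pi(\tilde b)=a$, and $F_{\tilde\pi}(\End(\tilde b))=U$. I would define $\varphi:A'\to\tilde A$ by letting $\varphi(wU)$ be the endpoint of the lift of any word expression for $w$ starting at $\tilde b$. The critical point is well-definedness: two word expressions for the same morphism of $\Wg(\cC)$ lift to endpoints in $\tilde A$ connected by a morphism whose underlying element of $\Aut(\ndZ^I)$ is the identity, and (C3) for $\tilde\cC$ forces these endpoints to coincide; and if $wU=w'U$ with $w^{-1}w'=u\in U$, then the lift of $u$ starting at $\tilde b$ is an endomorphism of $\tilde b$ because $F_{\tilde\pi}(\End(\tilde b))=U$ together with the injectivity from (1.A) applied to $\tilde\pi$, so the endpoints of the lifts of $w$ and $w'$ agree. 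A symmetric construction from $\tilde\cC$ back to $\cC'$ produces the inverse bijection, and compatibility with the $\rfl_i$'s, Cartan matrices, and the projections is built into the definition. The principal obstacle throughout is exactly this well-definedness of path-lifting --- it is where (C3) is indispensable, whereas all other steps reduce to bookkeeping of coset representatives and word expressions.
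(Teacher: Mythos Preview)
Your proposal is correct and follows essentially the same route as the paper: (1.A)--(1.C) are handled identically, and for (2) you build $A'$ as the set of left cosets $wU$ (you call them ``right cosets'' but your definition $wU=\{w\circ u\}$ matches the paper's left cosets), verify (C3) for $\cC'$ via (C3) for $\cC$, and prove uniqueness by lifting words to $\tilde\cC$. One small imprecision: in the uniqueness argument, showing that the lift of $u\in U$ from $\tilde b$ returns to $\tilde b$ does not follow from the injectivity of (1.A) alone---you need that some $\tilde u\in\End(\tilde b)$ has $F_{\tilde\pi}(\tilde u)=u$ and then apply (C3) for $\tilde\cC$ to the composite (lift)$\circ\tilde u^{-1}$, exactly the content of the paper's Lemma~\ref{le:C3}(2).
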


\begin{proof}
%
  (1.A) Each element $w'\in \End (a')$ is a product of $\s _i^{b'}$ for some
  $i\in I$ and $b'\in A'$. Moreover, $w'$ can be naturally regarded as an
  element in $\Aut (\ndZ ^I)$. The same is true for $w\in \End (a)$.
  Since $C'^{b'}=C^{\pi (b')}$ for all $b'\in
  A'$, $F_\pi (w')$ identifies with the same element of $\Aut (\ndZ ^I)$ as
  $w'$. This proves (1.A).

  (1.B) Let $b'\in A'$. Since $\cC '$ is connected,
  there exists $w'\in \Hom (a',b')$.
  Then $\End (b')=w'\End (a')w'^{-1}$. Since $F_\pi $ is a functor,
  $F_\pi (\End (b'))=F_\pi (w')F_\pi (\End (a'))F_\pi (w')^{-1}$.

  (1.C) Assume that $w\in \End (a)$ such that $U'=wF_\pi (\End (a'))w^{-1}$.
  Then
  $w=\s _{i_1}\cdots \s _{i_{k-1}}\s _{i_k}^a$ for some $k\in \ndN _0$ and
  $i_1,\ldots ,i_k\in I$. Let
  $w'=\s _{i_1}\cdots \s _{i_{k-1}}\s _{i_k}^{a'}$ and
  $b'=\rfl '_{i_1}\cdots \rfl '_{i_k}(a')$. Then $\End (b')=w'\End (a')w'^{-1}$,
  and hence $F_\pi (\End (b'))=w F_\pi (\End (a'))w^{-1}=U'$.

  (2) We construct $\cC '$ explicitly. Let
  \[ A'=\Hom (\Wg (\cC ))/U
  = \{ gU\subset \Hom (a,b)\,|\,b\in A,g\in \Hom (a,b)\}
  \]
  be the set of left cosets. For all $i\in I$ and $gU\in A'$ with
  $g\in \Hom (a,b)$, where $b\in A$,
  define $C'^{gU}=C^b$ and $\rfl '_i(gU)=\s _i^bgU$.
  Then $\rfl '_i:A'\to A'$ satisfies (C1) since
  $\s _i^{\rfl _i(b)}\s _i^b=\id $ and $\rfl _i^2=\id $, and $\cC '$ fulfills
  (C2), since $\cC $ does. Since $\cC $ is connected, $\cC
  '=\cC '(I,A',(\rfl '_i)_{i\in I},(C'^{a'})_{a'\in A'})$ is a connected
  Cartan scheme.
  Define $\pi :A'\to A$
  by $\pi (gU)=b$ for all $b\in A$, $g\in \Hom (a,b)$. Then
  $F_\pi (\End (1_aU))=U$ and
  $|\pi ^{-1}(a)|=[\End (a):U]$. Since $\cC '$ is connected,
  $|\pi ^{-1}(b)|=|\pi ^{-1}(a)|$ for all $b\in A$.

  Assume that $\cC $ satisfies (C3). We show that $\cC '$ satisfies (C3).
  For $l\in \{1,2\}$ let $a_l\in A$ and $g_l\in \Hom (a,a_l)$
  such that
  $(g_1U,\id ,g_2U)\in \Hom (\Wg (\cC '))$. Then there exist
  $k\in \ndN _0$ and $i_1,\ldots ,i_k\in I$ such that
  $\s _{i_1}\cdots \s _{i_{k-1}}\s _{i_k}^{a_2}g_2U=g_1U$
  and that
  $\s _{i_1}\cdots \s _{i_{k-1}}\s _{i_k}^{a_2}=\id $ in $\Aut (\ndZ ^I)$.
  Since $\cC $ fulfills (C3), we obtain that $a_1=a_2$, and hence $g_2U=g_1U$.
  Therefore $\cC '$ satisfies (C3).

  Finally, let $\pi :\cC '\to \cC $ and $\pi '':\cC ''\to \cC $ be
  coverings of $\cC $ satisfying (C3), and assume that there exist
  $b'\in A'$, $b''\in A''$ such that $\pi (b')=\pi ''(b'')=a$ and
  $F_\pi (\End (b'))=F_{\pi ''}(\End (b''))=U$.
  We have to show that $\cC '$ and $\cC ''$ are equivalent Cartan schemes.
  Define $\phi :A'\to A''$ by
  \[ \phi (\rfl '_{i_1}\cdots \rfl '_{i_k}(b'))=
  \rfl ''_{i_1}\cdots \rfl ''_{i_k}(b'')\qquad
  \text{for all $k\in \ndN _0$, $i_1,\ldots ,i_k\in I$.}
  \]
  Then $\phi $ is well-defined. Indeed,
  assume that $\rfl '_{i_1}\cdots \rfl '_{i_k}(b')=b'$.
  Then
  $\s _{i_1}\cdots \s _{i_k}^{b'}\in \End (b')$, and hence application of
  $\pi $ resp. $F_\pi $ gives that
  $\rfl _{i_1}\cdots \rfl _{i_k}(a)=a$,
  $\s _{i_1}\cdots \s _{i_k}^a\in U$.
  Thus
  $F_{\pi ''}(\s _{i_1}\cdots \s _{i_k}^{b''})\in U$, and hence
  Lemma~\ref{le:C3}(2) gives that
  $\rfl ''_{i_1}\cdots \rfl ''_{i_k}(b'')=b''$.
  The compatibility of $\phi $ with $\rfl ',\rfl '',C'^{b'},C''^{b''}$ is
  fulfilled by Def.~\ref{de:cover} and by definition of $\phi $.
  Further, $\phi :A'\to A''$ is a bijection, the construction of $\phi ^{-1}$
  being analogous.
  Hence $\phi $
  gives rise to an equivalence of the Cartan schemes $\cC '$ and $\cC ''$.
\end{proof}

\begin{defin}
  We say that a Cartan scheme $\cC $ is \textit{simply connected},
  if $\End (a)$ is the trivial group for all $a\in A$.
\end{defin}

\begin{corol}
  Let $\cC $ be a connected Cartan scheme satisfying (C3).
  Then up to equivalence there exists a unique
  covering $\cC '$ of $\cC $ which is simply connected and satisfies (C3).
\end{corol}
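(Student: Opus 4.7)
The plan is to deduce the corollary directly from Proposition~\ref{pr:cover}(2) by taking $U$ to be the trivial subgroup. Fix an object $a\in A$ and set $U=\{1\}\subset \End(a)$. Since $\cC$ satisfies (C3), Proposition~\ref{pr:cover}(2) produces a covering $\pi:\cC'\to\cC$, unique up to equivalence among coverings satisfying (C3) with $F_\pi(\End(b'))=\{1\}$ at some chosen preimage $b'\in A'$ of $a$. It then remains to verify two things: first, that this $\cC'$ is in fact simply connected (not merely trivial at $b'$); and second, that every simply connected covering of $\cC$ satisfying (C3) arises this way.

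For simple connectedness, I would argue as follows. Let $c'\in A'$ be arbitrary. Since $\cC'$ is connected, part (1.B) of Proposition~\ref{pr:cover} shows that $F_\pi(\End(c'))$ is conjugate in $\End(\pi(c'))$ to a subgroup whose image comes from the basepoint, and by shifting the basepoint to $\pi(c')$ (or by a direct argument with morphisms being identified with elements of $\Aut(\ndZ^I)$), $F_\pi(\End(c'))$ is conjugate to $F_\pi(\End(b'))=\{1\}$, hence itself trivial. Applying part (1.A) at $c'$ (the proof of which only uses that $F_\pi$ preserves the underlying element of $\Aut(\ndZ^I)$, so works at any object), the map $F_\pi:\End(c')\to \End(\pi(c'))$ is injective, and therefore $\End(c')=\{1\}$. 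Thus $\cC'$ is simply connected.

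For uniqueness, let $\cC''$ be any simply connected covering of $\cC$ that satisfies (C3), with covering map $\pi'':\cC''\to \cC$. Since $\pi''$ is surjective, there exists $b''\in A''$ with $\pi''(b'')=a$; and since $\End(b'')=\{1\}$ by hypothesis, we have $F_{\pi''}(\End(b''))=\{1\}=U$. The uniqueness clause of Proposition~\ref{pr:cover}(2) now applies to $\pi$ and $\pi''$ and yields an equivalence of Cartan schemes $\cC'\cong\cC''$.

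There is no real obstacle here; the only subtle point is that the conclusion of Proposition~\ref{pr:cover}(2) gives triviality of $F_\pi(\End(b'))$ only at the distinguished object $b'$, whereas simple connectedness requires $\End(c')=\{1\}$ at every $c'\in A'$. This gap is bridged by combining the conjugacy statement (1.B) with the injectivity statement (1.A), both of which are already available from the previous proposition.
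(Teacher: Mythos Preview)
Your proof is correct and matches the paper's one-line argument (apply Proposition~\ref{pr:cover}(2) with $U=\{1\}$), supplying the details the paper leaves implicit. For the simple-connectedness step, your parenthetical route is the clean one: use (1.A) at $b'$ to obtain $\End(b')=\{1\}$, then conjugate within the connected groupoid $\cC'$ to reach every $c'$; note that (1.B) as stated only compares objects lying in the same fiber over $a$, so it is the direct conjugacy argument, not (1.B) itself, that does the work here.
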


As usual, this simply connected covering of $\cC $ is called the
\textit{universal covering}.

\begin{proof}
  The claim follows from 
  Prop.~\ref{pr:cover}(2) by setting $U=\{1\}$.
\end{proof}

\begin{propo}\label{pr:covrs}
  Let $\cC $, $\cC '$ be connected Cartan schemes and
  $\pi :\cC '\to \cC $ a covering.
  
  (1) If there exists a root system $\rsC '$
  of type $\cC '$, then the equations
  \begin{align}
    R^a=\mathop{\bigcap}_{a'\in A'\,|\,\pi (a')=a}R'^{a'}
    \qquad \text{for all $a\in A$}
    \label{eq:RR'1}
  \end{align}
  define a root system $\rsC $ of type $\cC $.

  (2) If there exists a root system $\rsC $
  of type $\cC $, and $\cC '$ satisfies (C3),
  then the equations
  \begin{align}
    R'^{a'}=R^{\pi (a')} \qquad \text{for all $a'\in A'$}
    \label{eq:RR'2}
  \end{align}
  define a root system $\rsC '$ of type $\cC '$.
\end{propo}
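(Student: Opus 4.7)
My plan is to verify axioms (R1)--(R4) separately for each part, expecting the bulk of the work to lie in axiom (R4) of part~(1).

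For part~(1), axioms (R1) and (R2) should fall out from the analogous properties for $\rsC '$: intersecting respects the $R^a_+\cup -R^a_+$ decomposition, and since every $R'^{a'}\cap \ndZ \al _i=\{\al _i,-\al _i\}$, the intersection is still $\{\al _i,-\al _i\}$. For~(R3) I would exploit that whenever $\pi (a')=a$ we have $C^{a'}=C^a$, and hence $\s _i^{a'}=\s _i^a$ as elements of $\Aut (\ndZ ^I)$; since $\s _i^a$ is a bijection of $\ndZ ^I$, it commutes with intersections, so
\[
\s _i^a(R^a)=\bigcap _{\pi (a')=a}\s _i^{a'}(R'^{a'})=\bigcap _{\pi (a')=a}R'^{\rfl '_i(a')}=R^{\rfl _i(a)},
\]
where the last equality uses (R3) for $\rsC '$ together with the fact that $\rfl '_i$ restricts to a bijection $\pi ^{-1}(a)\to \pi ^{-1}(\rfl _i(a))$ by~\eqref{eq:pirfl}.

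The main difficulty will be axiom~(R4) for part~(1). Given $m_{i,j}^a<\infty $, I need $(\rfl _i\rfl _j)^{m_{i,j}^a}(a)=a$, and I plan to reduce this to (R4) for $\rsC '$ via the fibrewise coincidence
\[
R'^{a'}\cap (\ndN _0\al _i+\ndN _0\al _j)=R'^{b'}\cap (\ndN _0\al _i+\ndN _0\al _j)\qquad \text{for all }a',b'\in \pi ^{-1}(a).
\]
This should follow because the positive rank-two roots at any object are built inductively from the Cartan entries encountered along the alternating $\{i,j\}$-walks $a',\rfl '_i(a'),\rfl '_j\rfl '_i(a'),\ldots $ and $a',\rfl '_j(a'),\rfl '_i\rfl '_j(a'),\ldots $; by Def.~\ref{de:cover} these data are pulled back from the corresponding walks based at $a$ in $\cC $, so they coincide at every fibre point. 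The intersection over $\pi ^{-1}(a)$ then collapses to any single $R'^{a'}\cap (\ndN _0\al _i+\ndN _0\al _j)$, giving $m_{i,j}^a=m_{i,j}^{a'}$. Axiom~(R4) for $\rsC '$ now provides $(\rfl '_i\rfl '_j)^{m_{i,j}^{a'}}(a')=a'$, and projecting through $\pi $ yields the desired relation in $\cC $.

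For part~(2), setting $R'^{a'}:=R^{\pi (a')}$, axioms (R1) and (R2) are inherited immediately from $\rsC $, and (R3) should be the short chain
\[
\s _i^{a'}(R'^{a'})=\s _i^{\pi (a')}(R^{\pi (a')})=R^{\rfl _i(\pi (a'))}=R^{\pi (\rfl '_i(a'))}=R'^{\rfl '_i(a')}.
\]
For (R4) I observe that $m_{i,j}^{a'}=m_{i,j}^{\pi (a')}$ by construction; writing $m$ for this common value and assuming it is finite, (R4) for $\rsC $ combined with Theorem~\ref{th:Coxgr} yields $(\s _i\s _j)^m|_{\pi (a')}=\id $ in $\Aut (\ndZ ^I)$. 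Setting $b':=(\rfl '_i\rfl '_j)^m(a')$, the morphism $w':=(\s _i\s _j)^m|_{a'}\in \Hom (a',b')$ then satisfies $F_\pi (w')=\id _{\pi (a')}=F_\pi (\id _{a'})\in F_\pi (\End (a'))$, and Lemma~\ref{le:C3}(2)---this is exactly the place where the hypothesis that $\cC '$ satisfies (C3) enters---forces $a'=b'$, completing the verification.
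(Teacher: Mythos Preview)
Your argument is correct and follows the same approach as the paper, namely direct verification of (R1)--(R4); the paper's own proof of part~(1) is a single sentence, so you are supplying the details it omits. For (R4) in part~(2) your route through Lemma~\ref{le:C3}(2) is equivalent to the paper's direct appeal to (C3), and for (R4) in part~(1) the fibrewise coincidence you isolate is exactly what is needed and rests on the standard description of the positive roots in $\ndN _0\al _i+\ndN _0\al _j$ (cf.\ \cite[Lemma~4]{a-HeckYam08}).
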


\begin{proof}
  (1) By Def.~\ref{de:cover} and Axioms~(R1)--(R4) for $\rsC '$,
  the axioms (R1)--(R4) are fulfilled for $\rsC $.

  (2) Since Axioms~(R1)--(R3) hold for $\rsC $, they also hold for $\rsC '$.
  Suppose that $i,j\in I$ and $a'\in A'$
  such that $i\not=j$ and that $m^{a'}_{i,j}=m^a_{i,j}$ is finite, where
  $a=\pi (a')$.
  Then $(\s _i\s _j)^{m^a_{i,j}}1_a=\id _a$ by Thm.~\ref{th:Coxgr}. Hence 
  $(\s _i\s _j)^{m^a_{i,j}}1_{a'}=\id $, and (C3) for $\cC '$
  implies that $(\rfl '_i\rfl '_j)^{m^{a'}_{i,j}}(a')=a'$. Thus (R4) holds for
  $\rsC '$ and hence $\rsC '$ is a root system of type $\cC '$.
\end{proof}

\section{Continued fractions}
\label{sec:cofrac}

Continued fractions are related to Weyl groupoids of Cartan schemes of rank two.
We recall some basic facts about continued fractions and formulate the facts
we will use in our study.

A {\it continued fraction} is a sequence of indeterminates
$a_1,a_2,a_3,\ldots$, $b_0,b_1,b_2,\ldots$
written in the form
\[ b_0+\frac{a_1|}{|b_1}+\frac{a_2|}{|b_2}+\ldots =
b_0+\frac{a_1}{b_1+\frac{a_2}{b_2+ _{\ddots} }} \]
(see \cite{b-Perron29} for an introduction). Specializing the right expression
to integers, the {\it convergents} are the numbers
\[ \frac{A_n}{B_n} = b_0+\frac{a_1|}{|b_1}+\frac{a_2|}{|b_2}
+\ldots+\frac{a_n|}{|b_n}, \]
also given by the recursion
\begin{gather*}
  A_{-1}=1, \quad A_0=b_0, \qquad B_{-1}=0, \quad B_0=1, \\
  A_\nu=b_\nu A_{\nu-1}+a_\nu A_{\nu-2},\quad
  B_\nu=b_\nu B_{\nu-1}+a_\nu B_{\nu-2}
\end{gather*}
for all $\nu \in \ndN $, or
\begin{eqnarray}
\label{etagen0}
\begin{pmatrix} B_0 & A_0 \\ B_{-1} & A_{-1} \end{pmatrix} &=&
\begin{pmatrix} 1 & b_0 \\ 0 & 1 \end{pmatrix},\\
\label{eq:etagen}
\begin{pmatrix} b_\nu & a_\nu \\ 1 & 0 \end{pmatrix}
\begin{pmatrix} B_{\nu-1} & A_{\nu-1} \\ B_{\nu-2} & A_{\nu-2} \end{pmatrix} &=&
\begin{pmatrix} B_\nu & A_\nu \\ B_{\nu-1} & A_{\nu-1} \end{pmatrix}.
\end{eqnarray}
One says that 
$b_0+\frac{a_1|}{|b_1}+\frac{a_2|}{|b_2}+\ldots $ is \textit{convergent}, if
the sequence $(A_\nu /B_\nu )_{\nu \ge \nu _0}$ is well-defined and convergent
(with respect to the standard topology of $\ndR $) for some $\nu _0\in \ndN $.

The case where all $a_\nu$ are $1$ is the most important one and
well understood.
However, we will be interested in a different case:
From now on, let $a_\nu=-1$, $b_\nu\in\NN$ for all $\nu$ and assume
that the sequence $b_1,b_2,\ldots$ is periodic.
For any $i\in \ZZ$, let
\begin{align}
  \eta (i)=
  \begin{pmatrix}
    i & -1 \\ 1 & 0
  \end{pmatrix} \in \SLZ .
  \label{eq:etai}
\end{align}
We will often need the following equations, which hold for all $i,j,k\in \ndZ
$.
\begin{align}
  \eta (i)^{-1}=&
  \begin{pmatrix}
    0 & 1 \\ -1 & i
  \end{pmatrix},
  \label{eq:etainv}\\
  \eta (i)\eta (j)=&
  \begin{pmatrix}
    ij-1 & -i \\ j & -1
  \end{pmatrix},
  \label{eq:eta2}\\
  \eta (i)\eta (j)\eta (k)=&
  \begin{pmatrix}
    (ij-1)k-i & -(ij-1) \\ jk-1 & -j
  \end{pmatrix},
  \label{eq:eta3}\\
  \tau \eta (i)\tau =& \eta (i)^{-1},\quad
  \tau \eta (i)^{-1}\tau =\eta (i),
  \label{eq:etatau}
\end{align}
where
\begin{align}
  \tau =
  \begin{pmatrix}
    0 & 1 \\ 1 & 0
  \end{pmatrix}.
  \label{eq:tau}
\end{align}
By Eq.~\eqref{eq:etagen},
\[ \begin{pmatrix} B_n \\ B_{n-1} \end{pmatrix} =
\eta(b_n)\cdots\eta(b_1)
\begin{pmatrix} B_0 \\ B_{-1} \end{pmatrix}. \]
The product $\eta(b_n)\cdots\eta(b_1)$ will appear in the study of
Weyl groupoids of rank two. In particular, we will need to know for
which sequences $b_n,\ldots,b_1$ this product has finite order.
If it has finite order, then, since $B_{-1}=0$,
there exists $\nu \in \ndN $ such that $B_\nu =0$.

The following fact is well-known. Variations of it were considered for example
by Stern \cite[\S 51,\,Satz\,15]{b-Perron29}, Pringsheim
\cite[\S 53,\,Satz\,24]{b-Perron29} and Tietze
\cite[\S 35,\,Satz\,1]{b-Perron29}.

\begin{theor}\label{th:Tietze}
If $a_\nu =-1$ and $b_\nu \ge 2$
for all $\nu \in \ndN $, then the continued fraction
$\frac{a_1|}{|b_1}+\frac{a_2|}{|b_2}+\ldots$
is convergent.
\end{theor}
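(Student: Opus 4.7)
The plan is to show that the sequence of partial convergents $A_n/B_n$ arises as the partial sums of an absolutely convergent telescoping series. Two ingredients are needed: a determinant identity linking consecutive rows of the recursion matrix, and a monotonicity estimate for $B_\nu$ forced by the assumption $b_\nu \ge 2$.

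For the first ingredient, the explicit form \eqref{eq:etai} gives $\det \eta (b_\nu )=1$, while the initial matrix in \eqref{etagen0} also has determinant $1$. Iterating \eqref{eq:etagen} therefore yields $A_{\nu -1}B_\nu -A_\nu B_{\nu -1}=1$ for all $\nu \ge 0$, and, whenever the denominator is nonzero, the usual computation
\[
\frac{A_\nu }{B_\nu }-\frac{A_{\nu -1}}{B_{\nu -1}} = -\frac{1}{B_\nu B_{\nu -1}}.
\]
For the second ingredient, I would prove by induction on $\nu \ge 0$ that $B_\nu >B_{\nu -1}$. The base $B_0=1>0=B_{-1}$ is immediate, and in the inductive step one has $B_\nu =b_\nu B_{\nu -1}-B_{\nu -2}\ge 2B_{\nu -1}-B_{\nu -2}=B_{\nu -1}+(B_{\nu -1}-B_{\nu -2})>B_{\nu -1}$ by the assumption $b_\nu \ge 2$ together with the induction hypothesis. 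Since the $B_\nu $ are integers, this also gives the quantitative estimate $B_\nu \ge \nu +1$ for all $\nu \ge 0$.

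Combining both ingredients, the series
\[
\sum_{\nu \ge 1}\left|\frac{A_\nu }{B_\nu }-\frac{A_{\nu -1}}{B_{\nu -1}}\right| \; = \; \sum_{\nu \ge 1}\frac{1}{B_\nu B_{\nu -1}} \; \le \; \sum_{\nu \ge 1}\frac{1}{\nu (\nu +1)}
\]
converges, so telescoping from $\nu =1$ to $n$ shows that $(A_n/B_n)_{n\ge 0}$ has a limit in $\ndR $, which is precisely convergence of the continued fraction in the sense of the paper. The main subtlety is the sign convention: because $a_\nu =-1$ rather than $+1$ as in the classical regular continued fractions of \cite{b-Perron29}, the recursion for $B_\nu $ is a difference and not a sum, so positivity and monotonicity of $B_\nu $ are not automatic. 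This is precisely the point where the hypothesis $b_\nu \ge 2$ (as opposed to merely $b_\nu \in \ndN $) is used; if a single $b_\nu $ were allowed to equal $1$, the sequence $B_\nu $ could stagnate or even vanish, and the convergence argument would collapse.
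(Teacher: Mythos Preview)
Your argument is correct. The determinant identity and the strict monotonicity $B_\nu >B_{\nu -1}$ (hence $B_\nu \ge \nu +1$) are exactly what is needed, and the telescoping estimate $\sum 1/(B_\nu B_{\nu -1})\le \sum 1/(\nu (\nu +1))$ finishes the job cleanly.

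As for comparison: the paper does not actually prove Theorem~\ref{th:Tietze}. It is stated as a classical fact, with pointers to results of Stern, Pringsheim, and Tietze in Perron's monograph \cite{b-Perron29}. The authors only \emph{use} the theorem (via Corollary~\ref{co:Pringsheim}); they do not supply an argument. So your proof is not a paraphrase of something in the paper but a genuine, self-contained replacement for an external citation. The classical proofs in \cite{b-Perron29} proceed along similar lines (monotonicity of the denominators and the determinant identity are the standard ingredients), so your approach is in the spirit of the cited sources even if the paper itself is silent.
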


Thus we get:

\begin{corol}\label{co:Pringsheim}
Let $n\in\NN$ and $b_1,\ldots,b_n\in\ZZ$. If $b_i\ge 2$
for all $i\in\{1,\ldots,n\}$, then $\eta(b_1)\cdots\eta(b_n)$
does not have finite order.
\end{corol}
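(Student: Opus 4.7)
The natural approach is a proof by contradiction via Theorem~\ref{th:Tietze}, exploiting that the recursion in \eqref{eq:etagen} expresses the denominators $B_\nu$ as iterated products of the matrices $\eta(b_\nu)$ applied to $\binom{B_0}{B_{-1}}=\binom{1}{0}$.

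First I would transfer the hypothesis from $M:=\eta(b_1)\cdots\eta(b_n)$ to the reversed product $N:=\eta(b_n)\cdots\eta(b_1)$ that actually appears in the recursion. Iterating Eq.~\eqref{eq:etatau} yields
\[ \tau M\tau=\eta(b_1)^{-1}\cdots\eta(b_n)^{-1}=N^{-1}, \]
so $M$ is conjugate to $N^{-1}$, and therefore $N$ has the same finite order $k\ge 1$ as $M$.

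Next I would build a periodic continued fraction witnessing the contradiction: set $b_0:=0$ and extend $b_1,\ldots,b_n$ by $b_{\nu+n}:=b_\nu$ for all $\nu\in\NN$, so that every coefficient satisfies $b_\nu\ge 2$. Iterating Eq.~\eqref{eq:etagen} then gives
\[ \binom{B_{jkn}}{B_{jkn-1}}=N^{jk}\binom{B_0}{B_{-1}}=\binom{1}{0}\qquad\text{for all }j\in\NN, \]
and hence $B_{jkn-1}=0$ for infinitely many indices. A brief check disposes of the borderline case $kn=1$: it would force $\eta(b_1)=I$, which is impossible since the $(2,2)$-entry of $\eta(b_1)$ is $0$.

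Finally, Theorem~\ref{th:Tietze} asserts that the continued fraction $\frac{-1|}{|b_1}+\frac{-1|}{|b_2}+\ldots$ is convergent, which by the definition recalled just before the theorem requires $B_\nu\neq 0$ for all $\nu$ past some $\nu_0$. This contradicts the vanishing of $B_{jkn-1}$ for arbitrarily large $j$, completing the proof. The only subtlety is the reversal of the product order between the statement of the corollary and the recursion; once that is handled, the argument is a direct application of Theorem~\ref{th:Tietze}.
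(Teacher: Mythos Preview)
Your proof is correct and follows essentially the same approach as the paper: a contradiction via Theorem~\ref{th:Tietze}, showing that finite order of the product would force infinitely many vanishing denominators $B_\nu$ in the associated periodic continued fraction. The only cosmetic difference is how the product reversal is handled: the paper sidesteps it by choosing the continued fraction with coefficients in the order $b_n,b_{n-1},\ldots,b_1$ (so that the recursion directly produces $M=\eta(b_1)\cdots\eta(b_n)$), whereas you keep the order $b_1,\ldots,b_n$ and instead conjugate by $\tau$ to transfer the finite-order hypothesis to $N=\eta(b_n)\cdots\eta(b_1)$; both devices accomplish the same thing, and your extra remark on the case $kn=1$ is harmless but unnecessary.
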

\begin{proof}
Assume $b_i\ge 2$ for all $i\in\{1,\ldots,n\}$.
If $\eta(b_1)\cdots\eta(b_n)$ had finite order, then
the periodic continued fraction
\[ \frac{-1|}{|b_n}+\frac{-1|}{|b_{n-1}}+\cdots +\frac{-1|}{|b_1}
   +\frac{-1|}{|b_n}+\frac{-1|}{|b_{n-1}}+\cdots +\frac{-1|}{|b_1}
   +\frac{-1|}{|b_n}+\cdots 
\]
would have infinitely many convergents with denominator $0$.
This is a contradiction to Thm.~\ref{th:Tietze}.
\end{proof}

One can also prove Cor.~\ref{co:Pringsheim} without
Thm.~\ref{th:Tietze}, e.\,g.~by \cite[Lemma\,9]{a-Heck08a}.

\section{Distinguished finite sequences of integers}
\label{sec:seq}

We now study a special class of finite sequences of positive integers.
They correspond to a class of continued fractions which are not convergent.
Later we will use these sequences to classify finite root systems of type
$\cC $ and rank two. Recall the definition of the map $\eta :\ndZ \to \SLZ $
from Eq.~\eqref{eq:etai}.

\begin{defin}
  Let $\cA $ denote the set of finite sequences $(c_1,\ldots ,c_n)$ of
  integers such that $n\ge 1$ and $\eta (c_1)\cdots \eta (c_n)=-\id $.
  Let $\cAp $ be the subset of $\cA $ formed by those $(c_1,\ldots ,c_n)\in
  \cA $, for which $c_i\ge 1 $ for all $i\in \{1,\ldots ,n\}$ and
  the entries in the first column of $\eta (c_1)\cdots \eta (c_i)$ are
  nonnegative for all $i<n$.
\end{defin}

The following lemma will be crucial for our analysis of $\cAp $.
It is related to a well-known transformation formula for continued fractions,
see \cite[\S 37,\,Eqs.\,(1),(2)]{b-Perron29}.

\begin{lemma}
  \label{le:cApcontr}
  Let $n\ge 3$ and $c=(c_1,1,c_3,c_4,\ldots ,c_n)$
  such that $c_i\in \ndZ $ for all
  $i\in \{1,\dots ,n\}$. Let $c'=(c_1-1,c_3-1,c_4,\dots ,c_n)$.
  
  (1) $c'\in \cA $ if and only if $c\in \cA $.

  (2) $c'\in \cAp $ if and only if $c\in \cAp $, $c_1,c_3\ge 2$.

  (3) If $c\in \cAp $, then either $n=3$, $c_1=c_3=1$ or $n>3$,
  $c_1,c_3\ge 2$.
\end{lemma}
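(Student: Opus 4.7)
The plan is to extract all three parts of the lemma from the single matrix identity
\[
\eta(c_1)\eta(1)\eta(c_3)=\eta(c_1-1)\eta(c_3-1),
\]
which I would verify by plugging $(i,j,k)=(c_1,1,c_3)$ into \eqref{eq:eta3} and $(i,j)=(c_1-1,c_3-1)$ into \eqref{eq:eta2}. Both sides then have first column $((c_1-1)c_3-c_1,\,c_3-1)$ and second column $(-(c_1-1),-1)$, where the two expressions $(c_1-1)c_3-c_1$ and $(c_1-1)(c_3-1)-1$ agree after expansion. Right-multiplying by $\eta(c_4)\cdots\eta(c_n)$ then shows that $\eta(c_1)\cdots\eta(c_n)$ and $\eta(c_1-1)\eta(c_3-1)\eta(c_4)\cdots\eta(c_n)$ coincide, giving (1) immediately.

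For (3), I would read off from the formula above that the first column of the third prefix $\eta(c_1)\eta(1)\eta(c_3)$ of $c$ equals $((c_1-1)c_3-c_1,\,c_3-1)$. If $n=3$, this prefix equals $-\id$ by $c\in\cA$, so the second-column entry $-(c_1-1)$ must vanish, forcing $c_1=1$; likewise $c_3-1=0$ forces $c_3=1$. If instead $n>3$, then index $3$ falls under the $\cAp$-condition, hence $(c_1-1)c_3\ge c_1\ge 1$, which is impossible for $c_1=1$ and (since $c_1-1<c_1$) also for $c_3=1$, so $c_1,c_3\ge 2$.

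For (2), I would match the $\cAp$-conditions for $c$ and $c'=(c_1-1,c_3-1,c_4,\ldots,c_n)$ position by position, using the identity. The positivity of entries is equivalent: the conditions $c_j\ge 1$ for all $j$ together with $c_1,c_3\ge 2$ are the same as $c'_k\ge 1$ for all $k$, since $c_2=1$ is automatic. For the first-column nonnegativity of prefixes, the identity gives
\[
\eta(c_1)\cdots\eta(c_i)=\eta(c'_1)\cdots\eta(c'_{i-1})\qquad\text{for every }i\ge 3,
\]
matching index $i$ of $c$ with index $i-1$ of $c'$. The remaining cases $i=1,2$ for $c$ and $j=1$ for $c'$ each reduce to the single inequality $c_1\ge 1$ or $c_1-1\ge 1$, both of which are controlled by $c_1\ge 2$. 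Combining these observations with (1) closes both directions of (2).

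There is no deep obstacle: the entire proof is driven by the three-to-two reduction $\eta(c_1)\eta(1)\eta(c_3)=\eta(c_1-1)\eta(c_3-1)$, which is a direct variant of the transformation formula cited in the text. The only care required is the index bookkeeping in (2), and in remembering to handle the two leading prefixes of $c$ separately since the identity only rewrites prefixes that already contain $c_3$.
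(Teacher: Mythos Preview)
Your proof is correct and follows essentially the same route as the paper's: both hinge on the identity $\eta(c_1)\eta(1)\eta(c_3)=\eta(c_1-1)\eta(c_3-1)$, derived from \eqref{eq:eta2} and \eqref{eq:eta3}, and then read off parts (1)--(3) from the resulting prefix matching. Your treatment of (2) is more explicit than the paper's one-line ``Thus (2) holds'', and your case split on $n$ in (3) is the contrapositive of the paper's argument (which assumes $c_1=1$ or $c_3=1$ and deduces $n=3$), but the substance is identical.
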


\begin{proof}
  If $i,k\in \ndZ $, then
  \begin{align*}
    \eta (i)\eta (1)\eta (k)=
    \begin{pmatrix}
      ik-i-k & 1-i \\ k-1 & -1
    \end{pmatrix}
    =\eta (i-1)\eta (k-1)
  \end{align*}
  by Eqs.~\eqref{eq:eta2}, \eqref{eq:eta3}. This gives (1).
  By Eq.~\eqref{eq:eta2}, the first column of $\eta (c_1)\eta (1)$
  contains only nonnegative
  integers if and only if $c_1\ge 1$. Thus (2) holds.
  Let $c\in \cAp $ such that $c_1=1$ or $c_3=1$.
  Then Eq.~\eqref{eq:eta3} gives that the upper left entry of $\eta (c_1)\eta
  (1)\eta (c_3)$ is $-1$, and hence $n=3$. Then $c\in \cA $ implies that
  $c_1=c_3=1$. Hence (3) is proven.
\end{proof}

\begin{propo}
  Let $n\in \ndN $ and $(c_1,\ldots ,c_n)\in \cAp $.
  
  (1) Let $i,j\in \{1,\ldots ,n\}$ with $i\le j$ and $(i,j)\not=(1,n)$.
  Then $\eta (c_i)\eta (c_{i+1})\cdots \eta (c_j)\in \SLZ $
  such that the first column contains only
  nonnegative and the second only nonpositive integers.

  (2) Let $i\in \{1,\ldots ,n\}$.
  Then $(c_i,c_{i+1},\ldots ,c_n,c_1,\ldots ,c_{i-1})\in \cAp $.

  (3) $(c_n,c_{n-1},\ldots ,c_2,c_1)\in \cAp $.
  
  (4) If $n\le 3$ then $(c_1,\ldots ,c_n)=(1,1,1)$.
  \label{pr:Aprop}
\end{propo}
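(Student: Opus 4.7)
The plan is to prove the four parts in the order (4), (2), (1), (3). Throughout I will set $M_k=\eta(c_1)\cdots\eta(c_k)$ for $0\le k\le n$ with $M_0=\id$, and write $v_k$ for the first column of $M_k$. The recursion $M_{k+1}=M_k\eta(c_{k+1})$ gives the vector recursion $v_{k+1}=c_{k+1}v_k-v_{k-1}$ (with $v_0=\begin{pmatrix}1\\0\end{pmatrix}$), shows that the second column of $M_k$ equals $-v_{k-1}$, and together with $\det M_k=1$ yields $\det(v_{k-1},v_k)=1$ for all $k\ge 1$. In these terms the definition of $\cAp$ says precisely that $v_0,v_1,\ldots,v_{n-1}\in\ndN_0^2$ and $v_n=\begin{pmatrix}-1\\0\end{pmatrix}$.

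For (4) I would equate entries of $\eta(c_1)\cdots\eta(c_n)$ with $-\id$ using Eqs.~\eqref{eq:etai}--\eqref{eq:eta3}: the cases $n=1,2$ are incompatible with $c_i\ge 1$, and for $n=3$ the four entrywise equations immediately force $c_1=c_2=c_3=1$. For (3) I would apply Eq.~\eqref{eq:etatau}: inverting $M_n=-\id$ and telescoping $\eta(c_k)^{-1}=\tau\eta(c_k)\tau$ via $\tau^2=\id$ gives the reversed full product $\eta(c_n)\cdots\eta(c_1)=-\id$; for the partial products one rewrites $\eta(c_n)\cdots\eta(c_{n-k+1})=\tau(\eta(c_{n-k+1})\cdots\eta(c_n))^{-1}\tau=-\tau M_{n-k}\tau$ (using $\eta(c_{n-k+1})\cdots\eta(c_n)=-M_{n-k}^{-1}$), and a direct calculation from $M_{n-k}=(v_{n-k}\mid-v_{n-k-1})$ identifies its first column as $\begin{pmatrix}(v_{n-k-1})_2\\(v_{n-k-1})_1\end{pmatrix}$, which is nonnegative since $v_{n-k-1}\in\ndN_0^2$ for $k\in\{1,\ldots,n-1\}$.

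Part (2) is the substantive step. By iteration it suffices to handle the single shift $c'=(c_2,\ldots,c_n,c_1)$; centrality of $-\id$ immediately gives $\eta(c_2)\cdots\eta(c_n)\eta(c_1)=M_1^{-1}(-\id)M_1=-\id$. For the partial products, $P_{n-1}=-M_1^{-1}$ has first column $\begin{pmatrix}0\\1\end{pmatrix}$, while for $1\le k\le n-2$ one has $P_k=M_1^{-1}M_{k+1}$, whose first column computes to $\begin{pmatrix}(v_{k+1})_2\\c_1(v_{k+1})_2-(v_{k+1})_1\end{pmatrix}=\begin{pmatrix}(v_{k+1})_2\\\det(v_1,v_{k+1})\end{pmatrix}$. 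The main obstacle is proving the key geometric fact that $\det(v_l,v_m)\ge 0$ for all $0\le l\le m\le n-1$; a naive induction on $m-l$ using $\det(v_l,v_{m+1})=c_{m+1}\det(v_l,v_m)-\det(v_l,v_{m-1})$ fails because the subtraction can produce negative quantities. Instead I argue geometrically: $\det(v_{m-1},v_m)=1$ forces every $v_m$ to be nonzero, so the vectors $v_0,\ldots,v_{n-1}$ lie in the first quadrant and admit polar representations $v_m=r_m(\cos\theta_m,\sin\theta_m)$ with $\theta_m\in[0,\pi/2]$; the identity $r_{m-1}r_m\sin(\theta_m-\theta_{m-1})=1$ then forces $\theta_0<\theta_1<\cdots<\theta_{n-1}$, so $\det(v_l,v_m)=r_lr_m\sin(\theta_m-\theta_l)\ge 0$. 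Finally, (1) drops out of (2): for $(i,j)\ne(1,n)$, the shifted sequence $(c_i,\ldots,c_n,c_1,\ldots,c_{i-1})$ lies in $\cAp$, and $\eta(c_i)\cdots\eta(c_j)$ is its partial product of length $j-i+1<n$, so the first column is nonnegative by the definition of $\cAp$; the second column equals $-v'_{j-i}$ for the $v$-sequence $v'$ of the shifted word, hence nonpositive since $j-i\le n-1$.
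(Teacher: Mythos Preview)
Your proof is correct, but it takes a genuinely different route from the paper's.

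The paper makes (1) the engine and proves it first, by a lexicographic double induction on the pairs $(i,j)$: peeling off the rightmost factor $\eta(c_j)$ and invoking the hypothesis for $(i,j-1)$ controls the second column of the product, while prepending $\eta(c_{i-1})$ and invoking the hypothesis for $(i-1,j)$ controls one entry of the first column, and the determinant-one relation $bc=1+ad$ supplies the remaining entry. Parts (2) and (3) are then quick consequences of (1). For (4) the paper does not read off matrix entries directly but instead invokes Cor.~\ref{co:Pringsheim} to exclude $c_1,c_2,c_3\ge 2$ and Lemma~\ref{le:cApcontr}(3) together with (2) to handle the remaining cases.

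You instead make (2) the engine. Your polar-coordinate observation that $\det(v_{m-1},v_m)=1$ forces the angles $\theta_0<\theta_1<\cdots<\theta_{n-1}$ to be strictly increasing in $[0,\pi/2]$, and hence $\det(v_l,v_m)\ge 0$ for all $l\le m\le n-1$, is a clean geometric explanation of exactly the positivity that the paper extracts from its double induction; (1) then drops out by cyclic shifting. Your direct treatment of (4) via the explicit formula~\eqref{eq:eta3} is also shorter than the paper's. What the paper's argument buys is that it stays entirely within integer-matrix arithmetic, never leaving the combinatorial setting of the continued-fraction recursion; what your argument buys is a transparent geometric picture in which the monotonicity is visually obvious and the obstruction to naive induction (the subtraction in $\det(v_l,v_{m+1})=c_{m+1}\det(v_l,v_m)-\det(v_l,v_{m-1})$) simply disappears.
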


\begin{proof}
  (1) We proceed by induction on the lexicographically ordered pairs $(i,j)$.

  If $i=j$ then we are done, since the matrix $\eta (c_i)$ satisfies the
  claim.

  Let $i,j\in \{1,\ldots ,n\}$ with $i<j$ and $(i,j)\not=(1,n)$.
  Assume
  that the claim holds for all pairs $(i',j')\in \{1,\ldots ,n\}$ such that
  $i'\le j'$ and either $i'<i$ or $i'=i$, $j'<j$. Let
  \[ \eta (c_i)\cdots \eta (c_j)=
  \begin{pmatrix}
    a & -b \\ c & -d
  \end{pmatrix}
  \]
  with $a,b,c,d\in \ndZ $. Clearly, $-ad+bc=1$ since $\eta (k)\in
  \SLZ $ for all $k\in \ndZ $.
  Moreover, Eq.~\eqref{eq:etainv} gives that
  \[ \eta (c_i)\cdots \eta (c_{j-1})=
  \begin{pmatrix}
    a & -b \\ c & -d
  \end{pmatrix}
  \begin{pmatrix}
    0 & 1 \\ -1 & c_j
  \end{pmatrix}
  =
  \begin{pmatrix}
    b & -(bc_j-a) \\ d & -(dc_j-c)
  \end{pmatrix}.
  \]
  Hence $b,d\ge 0$ by induction hypothesis.

  If $i=1$, then $a,c\ge 0$ by definition of $\cAp $ and the assumption
  $(i,j)\not=(1,n)$, and hence we are done.
  Otherwise
  \[ \eta (c_{i-1})\cdots \eta (c_j)=
  \begin{pmatrix}
    c_{i-1} & -1 \\ 1 & 0
  \end{pmatrix}
  \begin{pmatrix}
    a & -b \\ c & -d
  \end{pmatrix}
  =
  \begin{pmatrix}
    c_{i-1}a-c & d-c_{i-1}b \\ a & -b
  \end{pmatrix},
  \]
  and hence $a>0$ by induction hypothesis.
  Since $a,b,d\ge 0$, we get $bc=1+ad\ge 1$, and hence $c>0$, which proves the
  claim.

  (2) It suffices to prove the claim for $i=2$.
  If $\eta (c_1)\cdots \eta (c_n)=-\id $, then clearly
  $\eta (c_2)\cdots \eta (c_n)\eta (c_1)=-\id $.
  Let $j\in \{2,\ldots ,n\}$. Then
  the entries in the first column of $\eta (c_2)\cdots \eta (c_j)$
  are nonnegative by Part~(1) of the proposition.
  This gives (2).

  (3) Recall the definition of $\tau $ in Eq.~\eqref{eq:tau}.
  Then Eq.~\eqref{eq:etatau} gives that
  \[
    \eta (c_n)\eta (c_{n-1})\cdots \eta (c_1)=
    \tau \eta (c_n)^{-1}\eta (c_{n-1})^{-1}\cdots \eta (c_1)^{-1}\tau
    =-\id
  \]
  since $\eta (c_1)\cdots \eta (c_n)=-\id $. Therefore $(c_n,c_{n-1},\dots
  ,c_1)\in \cA $.

  Let $2\le i\le n$ and assume that
  \[ \eta (c_i)\eta (c_{i+1})\cdots \eta (c_n)=
    \begin{pmatrix}
      a & -b \\ c & -d
    \end{pmatrix}
  \]
  for some $a,b,c,d\in \ndZ $. Then $a,b,c,d\ge 0$ and $bc-ad=1$
  by Part~(1) of the proposition. We obtain that
  \begin{align*}
    \eta (c_n)\cdots \eta (c_i)=&
    \tau \eta (c_n)^{-1}\cdots \eta (c_i)^{-1}\tau \\
    =&
    \begin{pmatrix}
      0 & 1 \\ 1 & 0
    \end{pmatrix}
    \begin{pmatrix}
      -d & b \\ -c & a
    \end{pmatrix}
    \begin{pmatrix}
      0 & 1 \\ 1 & 0
    \end{pmatrix}
    =
    \begin{pmatrix}
      a & -c \\ b & -d
    \end{pmatrix}.
  \end{align*}
  Thus $(c_n,c_{n-1},\ldots ,c_1)\in \cAp $.

  (4) Equations $\eta (c_1)=-\id $, $\eta (c_1)\eta (c_2)=-\id $ have no
  solutions with $c_1,c_2\in \ndN $ by
  Eqs.~\eqref{eq:etai}, \eqref{eq:eta2}.
  Let now $n=3$ and $c_1,c_2,c_3\in \ndN $. If $c_1,c_2,c_3\ge 2$, then
  $(c_1,c_2,c_3)\notin \cA $ by Cor.~\ref{co:Pringsheim}. Otherwise
  $c_1=c_2=c_3=1$ by Lemma~\ref{le:cApcontr}(3) and Part~(2) of the
  proposition. Relation $(1,1,1)\in \cAp $ holds by
  Eq.~\eqref{eq:eta2} with $i=j=1$. This proves (4).
\end{proof}

By Prop.~\ref{pr:Aprop}(2),(3)
the dihedral group $\DG _n$ of $2n$ elements,
where $n\in \ndN $, acts on sequences
of length $n$ in $\cAp $ by cyclic permutation of the entries and by
reflections.
This action gives rise to an equivalence relation $\sim $
on $\cAp $ by taking the
orbits of the action as equivalence classes.
For brevity we will usually not distinguish between elements of $\cAp $ and
$\cAp /{\sim }$.
By Prop.~\ref{pr:Aprop}(4) there is precisely one element of $\cAp /{\sim }$
of length $3$.

Lemma~\ref{le:cApcontr} suggests to introduce a further equivalence relation
$\approx $ on $\cAp $.
Let $n,m\in \ndN $ with $m\ge n$, and let $c=(c_1,\ldots ,c_n)$,
$d=(d_1,\ldots ,d_m)\in \cAp $.
We write $c\approx 'd$ if and only if
\begin{itemize}
  \item $m=n$, $c\sim d$ or
  \item $m=n+1$, $d=(c_1+1,1,c_2+1,c_3,c_4,\ldots  ,c_n)$.
\end{itemize}

\begin{defin}
  Let $c,d\in \cAp $. Write $c\approx d$ if and only if there exists $k\in
  \ndN $ and a sequence $c=e_1,e_2,\ldots ,e_k=d$ of elements of $\cAp $, such
  that $e_i\approx 'e_{i+1}$ or $e_{i+1}\approx 'e_i$ for all $i\in
  \{1,2,\ldots ,k-1\}$.
\end{defin}

Clearly, $\approx $ is an equivalence relation on $\cAp $.
We are
interested in the equivalence classes of $\cAp /{\approx }$.

\begin{theor}\label{th:rep111}
  The only element of $\cAp /{\approx }$ is $(1,1,1)$.
\end{theor}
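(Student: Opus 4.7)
My plan is to prove Theorem 4.4 by induction on the length $n$ of a representative $(c_1,\ldots,c_n)\in\cAp$, exploiting the contraction move of Lemma~\ref{le:cApcontr} to decrease $n$ at each step, and using Corollary~\ref{co:Pringsheim} to guarantee that a contraction is always available.

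For the base case, if $n\le 3$ then Proposition~\ref{pr:Aprop}(4) tells us directly that $(c_1,\ldots,c_n)\sim(1,1,1)$, so such a sequence is $\approx$-equivalent to $(1,1,1)$. For the inductive step, I assume $n\ge 4$ and that every element of $\cAp$ of length strictly less than $n$ is $\approx$-equivalent to $(1,1,1)$. Given $(c_1,\ldots,c_n)\in\cAp$, the defining equation $\eta(c_1)\cdots\eta(c_n)=-\id$ shows that this product has order dividing $2$; in particular it has finite order. By Corollary~\ref{co:Pringsheim} this forces $c_i=1$ for at least one index $i\in\{1,\ldots,n\}$.

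Next I would use Proposition~\ref{pr:Aprop}(2) to cyclically permute the sequence so that the chosen $1$ sits in position $2$: there is a $\sim$-equivalent sequence of the form $(c_1',1,c_3',c_4',\ldots,c_n')\in\cAp$, with $c_1',c_3'\ge 2$ by Lemma~\ref{le:cApcontr}(3), since $n>3$. Now Lemma~\ref{le:cApcontr}(2) says that the contracted sequence $(c_1'-1,c_3'-1,c_4',\ldots,c_n')$ lies in $\cAp$, and by the very definition of $\approx'$ it is $\approx$-equivalent to $(c_1',1,c_3',c_4',\ldots,c_n')$ and hence to $(c_1,\ldots,c_n)$. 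Since this contracted sequence has length $n-1$, the induction hypothesis yields $(c_1,\ldots,c_n)\approx(1,1,1)$.

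This is essentially a bookkeeping argument: all the real content is packaged into the previous results. There isn't a serious obstacle — the only thing to check carefully is that the three ingredients (cyclic rotation $\sim\subset\approx$, the contraction $\approx'$, and the forced existence of some $c_i=1$) fit together without an edge case escaping induction. The delicate point is ensuring that when we contract we stay inside $\cAp$, which is precisely what Lemma~\ref{le:cApcontr}(2) together with Lemma~\ref{le:cApcontr}(3) guarantees once $n\ge 4$; this is why $n=3$ is the correct base case.
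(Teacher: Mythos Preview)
Your proof is correct and follows essentially the same approach as the paper: induct on length, use Corollary~\ref{co:Pringsheim} to locate an entry equal to $1$, rotate it into position $2$ via Proposition~\ref{pr:Aprop}(2), and contract via Lemma~\ref{le:cApcontr}(2),(3). The paper's argument is just a terser rendering of exactly these steps.
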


\begin{proof}
  Let $n\ge 1$ and $c=(c_1,\ldots ,c_n)\in \cAp $. By Prop.~\ref{pr:Aprop}(4)
  it suffices to prove that if $n\ge 4$, then $c\approx c'$ for some
  $c'=(c'_1,c'_2,\dots ,c'_{n-1})\in \cAp $.
  
  Assume that $n\ge 4$.
  By Cor.~\ref{co:Pringsheim} there exists $i\in
  \{1,\ldots ,n\}$ such that $c_i=1$. By Prop.~\ref{pr:Aprop}(2) and the
  definition of $\approx $ we may assume that $c_2=1$. Now apply
  Lemma~\ref{le:cApcontr}(2),(3) to obtain the desired $c'\in \cAp $.
\end{proof}

\begin{corol}\label{co:CartanSum}
  If $n\in \ndN $, $(c_1,\ldots ,c_n)\in \cAp $, then $\sum _{i=1}^n
  c_i=3(n-2)$.
\end{corol}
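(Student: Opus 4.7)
The plan is to reduce everything to the single sequence $(1,1,1)$ via Theorem~\ref{th:rep111}, and then show that the quantity $\sum_{i=1}^n c_i - 3(n-2)$ is an invariant of the equivalence relation $\approx$ on $\cAp$. Since $(1,1,1)$ has sum $3 = 3(3-2)$, every element of $\cAp$ must satisfy the same relation.

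The key step is to verify invariance under the generators of $\approx$. First, cyclic permutations and reflections (i.e.\ the $\sim $-action of the dihedral group from Prop.~\ref{pr:Aprop}(2),(3)) leave both the length $n$ and the sum $\sum c_i$ unchanged, so they leave $\sum c_i - 3(n-2)$ unchanged. Second, the length-changing move of $\approx'$ sends $c=(c_1,\ldots ,c_n)\in \cAp$ to $d=(c_1+1,1,c_2+1,c_3,\ldots ,c_n)\in \cAp$ (with $m=n+1$); here
\[ \sum_{i=1}^{m} d_i = \sum_{i=1}^{n} c_i + 3, \qquad m-2 = (n-2)+1, \]
so $\sum d_i - 3(m-2) = \sum c_i - 3(n-2)$ as well. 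Therefore the invariant is preserved by every generator of $\approx$.

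Now by Theorem~\ref{th:rep111}, any $c=(c_1,\ldots ,c_n)\in \cAp$ satisfies $c \approx (1,1,1)$. Evaluating the invariant at $(1,1,1)$ gives $1+1+1-3(3-2)=0$, so $\sum_{i=1}^n c_i - 3(n-2)=0$, which is the desired equality. I expect no real obstacle: the only thing to check carefully is the arithmetic in the two cases of $\approx'$, and that the element $d$ in the length-changing move does genuinely lie in $\cAp$ (which is already guaranteed by the way $\approx'$ was defined).
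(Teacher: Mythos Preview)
Your proof is correct and follows exactly the same strategy as the paper's own argument: show that $\sum_{i=1}^n c_i - 3(n-2)$ is an invariant of $\approx$ and evaluate it at $(1,1,1)$ using Theorem~\ref{th:rep111}. The paper's proof is simply a one-sentence statement of this idea; you have spelled out the easy verification that the invariant is preserved by both the $\sim$-moves and the length-changing move of $\approx'$, and your arithmetic is correct.
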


\begin{proof}
  The expression $\sum _{i=1}^n c_i -3(n-2)$ is zero for $c=(1,1,1)$ and is an
  invariant of $\approx $.
\end{proof}

\section{Connected root systems of rank two}
\label{sec:crs2}

Throughout this section let $I$ be a set with $|I|=2$,
$A$ a finite set, and
$\cC =\cC (I,A,(\rfl _i)_{i\in I},(C^a)_{a\in A})$
a connected Cartan scheme.
Since $\rfl _i^2=\id $ for all $i\in I$, and $\cC $ is connected,
the object change diagram of $\cC $ is either a chain
\begin{figure}
  \setlength{\unitlength}{.8mm}
  \rule[-3\unitlength]{0pt}{8\unitlength}
  \begin{picture}(34,5)(0,3)
    \put(1,2){\circle*{2}}
    \put(2,2){\line(1,0){10}}
    \put(13,2){\circle*{2}}
    \put(14,2){\line(1,0){5}}
    \put(23,1){\makebox[0pt]{$\cdots $}}
    \put(27,2){\line(1,0){5}}
    \put(33,2){\circle*{2}}
    \put(7,3){\makebox[0pt]{\scriptsize $?$}}
  \end{picture}
  \caption{Chain diagram}
  \label{fi:chain}
  \setlength{\unitlength}{.8mm}
  \begin{picture}(36,18)(0,3)
    \put(1,2){\circle*{2}}
    \put(2,2){\line(1,0){10}}
    \put(13,2){\circle*{2}}
    \put(14,2){\line(1,0){5}}
    \put(23,1){\makebox[0pt]{$\cdots $}}
    \put(27,2){\line(1,0){5}}
    \put(33,2){\circle*{2}}
    \put(1,3){\line(0,1){10}}
    \put(33,3){\line(0,1){10}}
    \put(1,14){\circle*{2}}
    \put(2,14){\line(1,0){10}}
    \put(13,14){\circle*{2}}
    \put(14,14){\line(1,0){5}}
    \put(23,13){\makebox[0pt]{$\cdots $}}
    \put(27,14){\line(1,0){5}}
    \put(33,14){\circle*{2}}
    \put(7,3){\makebox[0pt]{\scriptsize $1$}}
    \put(7,15){\makebox[0pt]{\scriptsize $1$}}
    \put(3,7){\makebox[0pt]{\scriptsize $2$}}
    \put(35,7){\makebox[0pt]{\scriptsize $?$}}
  \end{picture}
  \caption{Cycle diagram}
  \label{fi:cycle}
\end{figure}
(if $\rfl _i$ has a fixed point for some $i\in I$), see Fig.~\ref{fi:chain},
or a cycle, see Fig.~\ref{fi:cycle}.

Recall that an element $w\in \Hom (\Wg (\cC ))$ is called \textit{even} if
$\det (w)=1$.

\begin{lemma}
  The object change
  diagram of $\cC $
  is a cycle if and only if $\End (a)$ contains only even elements
  (for all $a\in A$). 
  \label{le:ocd}
\end{lemma}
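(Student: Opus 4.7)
The plan is to exploit the fact that the sign of $w\in\Hom(\Wg(\cC))$, i.e.\ $\det(w)\in\{\pm 1\}$, is a well-defined invariant on morphisms because every generator $\s_i^a\in\Aut(\ndZ^I)$ is a reflection and hence has determinant $-1$. Thus $w$ is even if and only if \emph{every} (equivalently, some) expression $w=\s_{i_n}^{b_{n-1}}\cdots \s_{i_1}^a$ has even length $n$.

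For the implication ``$\Gamma$ is a cycle $\Rightarrow$ all endomorphisms are even'', I would observe that by assumption both $\rfl_1$ and $\rfl_2$ are fixed-point-free involutions on $A$, so each vertex of $\Gamma$ carries exactly one edge of each label. Consequently any two edges meeting at a common vertex have different labels, so the edge labels alternate $1,2,1,2,\ldots$ as one traverses the cycle. This forces $|A|$ to be even and makes $\Gamma$ a bipartite graph. Choose a 2-coloring $\chi:A\to\{\pm 1\}$; then $\chi(\rfl_i(a))=-\chi(a)$ for all $a\in A$ and $i\in I$. For any $w=\s_{i_n}^{b_{n-1}}\cdots \s_{i_1}^a\in\End(a)$ we have $\rfl_{i_n}\cdots \rfl_{i_1}(a)=a$, so
\[
\chi(a)=\chi(\rfl_{i_n}\cdots \rfl_{i_1}(a))=(-1)^n\chi(a),
\]
forcing $n$ to be even, i.e.\ $\det(w)=1$.

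For the converse I would argue by contrapositive: if $\Gamma$ is not a cycle then, by the dichotomy recorded just before the lemma, $\Gamma$ is a chain and there exists $a\in A$ and $i\in I$ with $\rfl_i(a)=a$. Then $\s_i^a\in\End(a)$ and $\det(\s_i^a)=-1$, so $\End(a)$ contains an odd element. (Connectedness of $\cC$ then propagates this oddness to every $\End(b)$ by conjugation, since for any $w\in\Hom(a,b)$ the map $x\mapsto wxw^{-1}$ identifies $\End(a)$ with $\End(b)$ and preserves determinants.)

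The only point that requires any care is the claim that in the cycle case the labels really do alternate strictly around the cycle; this is essentially forced by the fact that each vertex has at most $|I|=2$ incident edges, one for each label, and I do not expect any deeper obstacle.
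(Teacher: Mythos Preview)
Your argument is correct. The paper's proof is very close in spirit: for the cycle case it asserts directly that $\End(a)$ consists precisely of the elements $(\s_i\s_j)^{k|A|/2}1_a$, $k\in\ndZ$, which are manifestly even, while for the chain case it exhibits the odd endomorphism $\s_i^a$ at a fixed point of $\rfl_i$, just as you do. Your bipartite-coloring argument is a mild but pleasant variation: rather than first identifying $\End(a)$ explicitly (which tacitly uses that every morphism can be reduced to an alternating word $\cdots\s_i\s_j\s_i$), you argue directly that any closed walk in the object change diagram has even length. This sidesteps the need to describe $\End(a)$ and makes the parity conclusion immediate; the paper's route, on the other hand, gives the slightly stronger information that $\End(a)$ is cyclic, which it later uses (e.g.\ in Lemma~\ref{le:cycle}).
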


\begin{proof}
  If the object change diagram of $\cC $ is a cycle,
  then for all $a\in A$,
  $\End (a)$ consists of the elements $(\s _i\s _j)^{k|A|/2}1_a$,
  where $k\in \ndZ $ and $I=\{i,j\}$. These are all even.
  Otherwise the object change diagram of $\cC $ is a chain, and there exists
  $a\in A$ and $i\in I$ such that $\rfl _i(a)=a$. Then $\End (a)$ is generated
  by $\s _i^a$ and $(\s _j\s _i)^{|A|-1}\s _j^a$ which are odd.
\end{proof}

Assume that $\cC $ admits a finite root system of type $\cC $.
The next proposition explains the relationship between the $m^a_{i,j}$
and the number $|A|$ of objects. For this, we need the following
standard lemma.

\begin{lemma}\label{le:12346}
Let $M\in \GLZ $. If the order $e$ of $M$ is finite, then
\[ -2\le \tr(M)\le 2,\qquad e\in\{1,2,3,4,6\}. \]
\end{lemma}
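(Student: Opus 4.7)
The plan is to read off the eigenvalues of $M$ from its characteristic polynomial and use that they must be roots of unity.

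Since $M$ has finite order $e$, it satisfies $M^e = I$, so its minimal polynomial divides $x^e - 1$. Because $x^e - 1$ has distinct roots in $\mathbb{C}$, $M$ is diagonalizable over $\mathbb{C}$, and its two eigenvalues $\lambda_1,\lambda_2$ are roots of unity. The characteristic polynomial
\[
\chi_M(x) = x^2 - \tr(M)\,x + \det(M)
\]
lies in $\mathbb{Z}[x]$, so $\lambda_1+\lambda_2 = \tr(M) \in \mathbb{Z}$ and $\lambda_1\lambda_2 = \det(M) \in \{1,-1\}$. Then $|\tr(M)| \le |\lambda_1|+|\lambda_2| = 2$ is immediate, giving the trace bound and forcing $\tr(M) \in \{-2,-1,0,1,2\}$.

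For the statement on the order, I would split into two cases by the sign of $\det(M)$. If $\det(M)=1$, then $\lambda_2 = \overline{\lambda_1}$, so $\tr(M) = 2\,\mathrm{Re}(\lambda_1)$; going through the five possible integer values of $2\,\mathrm{Re}(\lambda_1)$ pins $\lambda_1$ down to a primitive $1$st, $6$th, $4$th, $3$rd, or $2$nd root of unity, giving $e \in \{1,6,4,3,2\}$ respectively. If $\det(M) = -1$, then $\lambda_2 = -\overline{\lambda_1}$, so $\tr(M) = 2i\,\mathrm{Im}(\lambda_1)$; this is a real integer only when $\lambda_1 \in \{\pm 1\}$, forcing $\{\lambda_1,\lambda_2\} = \{1,-1\}$ and hence $e = 2$.

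There is no real obstacle here: the argument is a direct cyclotomic enumeration and the only thing to be careful about is the diagonalizability step (which ensures we really are dealing with roots-of-unity eigenvalues rather than a nontrivial Jordan block, the latter being incompatible with finite order). Combining the two cases yields $e \in \{1,2,3,4,6\}$ and $\tr(M) \in [-2,2]$, as claimed.
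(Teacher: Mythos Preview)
Your argument is correct and follows essentially the same line as the paper's own (commented-out) proof sketch: diagonalize $M$ over $\mathbb{C}$ so that its eigenvalues are roots of unity, then use that $\tr(M)$ and $\det(M)$ are integers to force $\zeta+\bar\zeta\in\ndZ$ (when $\det M=1$) or $\zeta-\bar\zeta\in\ndZ$ (when $\det M=-1$), which restricts the order of $\zeta$ to $\{1,2,3,4,6\}$. The paper in fact omits the proof in the final text, labeling the lemma as standard, so your write-up supplies exactly the missing details.
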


\begin{propo}\label{pr:mij}
Assume that $\cC $ admits a finite root system of type $\cC $.
Then the numbers $m^a_{i,j}=|R^a_+|$ are equal
for all objects $a$ and $i,j\in I$ with
$i\not=j$.
If the object change diagram is a cycle resp. a chain,
then $m^a_{i,j}=m\frac{|A|}{2}$ resp. $m^a_{i,j}=m|A|$
for some $m\in\{1,2,3,4,6\}$.
\end{propo}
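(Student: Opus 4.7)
The plan is to prove the constancy of $m^a_{i,j}$ via connectedness and axiom~(R3), then identify $m^a_{i,j}$ with the exact order of the matrix $\s_i\s_j$ at $a$, carry out a short orbit analysis of $\rfl_i\rfl_j$ on $A$ in the cycle and chain cases, and finally apply Lemma~\ref{le:12346}.

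Since $|I|=2$, for $i\neq j$ we have $\ndN_0\al_i+\ndN_0\al_j=\ndN_0^I$, so $m^a_{i,j}=|R^a_+|$ depends neither on the order of $i,j$ nor on the choice $(i,j)$. For any $b\in A$, connectedness of $\cC$ yields $w\in\Hom(a,b)$; iterating axiom~(R3) along a factorisation of $w$ into simple reflections gives $w(R^a)=R^b$, and since $w\in\Aut(\ndZ^I)$ is bijective and $R^a=R^a_+\cup(-R^a_+)$ by (R1), we conclude $|R^a_+|=|R^b_+|$. Call this common value $N$.

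By (R4), $(\rfl_i\rfl_j)^N(a)=a$, and Thm.~\ref{th:Coxgr} then promotes this to $(\s_i\s_j)^N1_a=\id$ in $\Wg(\cC)$. I would next argue that $N$ is the \emph{exact} order of the matrix $M:=\s_i^{\rfl_j(a)}\s_j^a\in\Aut(\ndZ^I)=\GLZ$. Indeed Thm.~\ref{th:Coxgr} presents $\Wg(\cC)$ by the relations $s_is_i1_b=1_b$ and $(s_js_k)^{m^b_{j,k}}1_b=1_b$, so any relation $(s_is_j)^{N'}1_a=1_a$ in $\Wg(\cC)$ is a consequence of these; and the standard fact that in the abstract dihedral group $\langle s,t\mid s^2=t^2=(st)^n=1\rangle$ the product $st$ has order exactly $n$ upgrades our relation $(s_is_j)^N1_a=1_a$ to a minimal one, forcing $N\mid N'$.

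Let $k$ be the size of the orbit of $a$ under $\langle\rfl_i\rfl_j\rangle$. In the cycle case, every vertex has precisely one edge of each label and no fixed points under $\rfl_i$ or $\rfl_j$, hence edges alternate labels along the cycle, $\rfl_i\rfl_j$ is the rotation by two edges, and $k=|A|/2$. In the chain case, iteratively applying $\rfl_i\rfl_j$ starting at $a$ moves two vertices at a time along the chain, using the fixed-point loops at the two endpoints to reverse direction; the resulting trajectory visits every vertex exactly once before returning to $a$, so $k=|A|$. In either case $k$ is the smallest positive integer with $(\rfl_i\rfl_j)^k(a)=a$ and $N\in k\ndN$, so write $N=mk$ with $m\in\ndN$. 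The element $T:=M^k=(\s_i\s_j)^k1_a\in\End(a)\subset\GLZ$ then has order $N/\gcd(N,k)=m$, and Lemma~\ref{le:12346} yields $m\in\{1,2,3,4,6\}$. The main obstacle is the second paragraph's identification of $N$ with the exact order of $M$; once this is granted, everything else is elementary orbit arithmetic on the object change diagram together with the classical fact of Lemma~\ref{le:12346}.
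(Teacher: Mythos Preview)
Your overall strategy coincides with the paper's: establish constancy of $m^a_{i,j}=|R^a_+|$ via (R3), compute the orbit size $d$ of $\rfl_i\rfl_j$ on $A$ as $|A|/2$ (cycle) or $|A|$ (chain), observe that $(\s_i\s_j)^d1_a\in\End(a)\subset\GLZ$ has order $m=N/d$, and apply Lemma~\ref{le:12346}. Your dihedral-quotient justification of the exact order is a welcome expansion of what the paper compresses into the phrase ``Thm.~\ref{th:Coxgr} and Lemma~\ref{le:12346} give\ldots''.

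There is, however, a genuine error. You define $M:=\s_i^{\rfl_j(a)}\s_j^a\in\GLZ$, a single fixed matrix, assert that its order is $N$, and later write $T:=M^k=(\s_i\s_j)^k1_a$. But the simple reflections $\s_i^b$ depend on the object $b$ through $C^b$, so
\[
(\s_i\s_j)^k1_a=\s_i^{\rfl_j(\rfl_i\rfl_j)^{k-1}(a)}\s_j^{(\rfl_i\rfl_j)^{k-1}(a)}\cdots\s_i^{\rfl_j(a)}\s_j^a
\]
is a product of $2k$ matrices that in general vary along the orbit; it is \emph{not} $M^k$, and $M$ need not have order $N$ at all. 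For instance, take the Cartan scheme with characteristic sequence $(5,1,2,2)$ from the example following Thm.~\ref{th:cycleclass}: at the object with off-diagonal entries $-5,-2$ one computes $M=\bigl(\begin{smallmatrix}3&-2\\2&-1\end{smallmatrix}\bigr)$, a non-identity unipotent matrix of infinite order, yet the root system is finite.

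The repair is to discard $M$ entirely and work directly with the groupoid morphism $T:=(\s_i\s_j)^d1_a\in\End(a)$, viewed as a matrix. Since composition in $\End(a)$ is matrix multiplication, one has $T^r=(\s_i\s_j)^{rd}1_a$ for every $r\ge 0$; Thm.~\ref{th:Coxgr} gives $T^m=\id_a$, and your dihedral-quotient argument (applied to the groupoid element $(\s_i\s_j)1_a$, not to $M$) shows that any relation $(\s_i\s_j)^{N'}1_a=\id_a$ forces $N\mid N'$, whence $T$ has order exactly $m$. Then Lemma~\ref{le:12346} finishes as you intended.
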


\begin{proof}
We have $m^a_{i,j}=|R^a_+|$ by Def. \ref{de:RSC}. Axiom
(R3) from the same definition implies that all $m^a_{i,j}$ are equal.
Let $d=|A|$ if the object change diagram is a chain and $d=|A|/2$ if it is a
cycle. Then $(\s _j\s _i)^k 1_a\in \End (a)$, $k\in \ndN _0$,
if and only if $k\in \ndN _0d$. Thm.~\ref{th:Coxgr} and
Lemma~\ref{le:12346} give that $md=m^a_{i,j}$ for some $m\in \{1,2,3,4,6\}$.
\end{proof}

We are going to give a characterization of
finite connected irreducible root systems of type $\cC $. First we analyze
root systems with simply connected Cartan schemes.

\begin{lemma}\label{le:ocdscrs}
  Assume that $\cC $ is simply connected
  and that $\rsC $ is a finite root system of type $\cC $.
  Then the object change diagram of $\cC $ is a cycle with $|R^a|$
  vertices, where $a\in A$.
\end{lemma}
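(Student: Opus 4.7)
The plan is to prove two things: (i) the object change diagram must be a cycle, and (ii) it has $|R^a|$ vertices.

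For (i), I would argue by contradiction using Lemma~\ref{le:ocd}. By the preamble of Section~\ref{sec:crs2}, the diagram is either a chain or a cycle. If it were a chain, then the proof of Lemma~\ref{le:ocd} produces $a\in A$ and $i\in I$ with $\rfl_i(a)=a$, so $\s_i^a\in\End(a)$; since $\s_i^a$ acts as a reflection on $\ndZ^I$, it is a nontrivial element of $\Aut(\ndZ^I)$, contradicting the simple connectedness hypothesis $\End(a)=\{1\}$. Hence the diagram is a cycle, and in particular $|A|$ is even.

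For (ii), I would apply Proposition~\ref{pr:mij}. Writing $I=\{i,j\}$, the cycle case of that proposition gives $m^a_{i,j}=m|A|/2$ for some $m\in\{1,2,3,4,6\}$, and the proofs of Proposition~\ref{pr:mij} and Lemma~\ref{le:ocd} identify $m$ as the order of the generator $(\s_i\s_j)^{|A|/2}1_a$ of the cyclic group $\End(a)$. Since $\cC$ is simply connected, this generator is trivial, so $m=1$. Therefore $m^a_{i,j}=|A|/2$, and
$|R^a|=2|R^a_+|=2m^a_{i,j}=|A|$,
which is exactly the number of vertices of the cycle.

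The only input that requires real care---and which is already absorbed in the cited results---is the identification of $m^a_{i,j}$ with the minimal positive integer $k$ such that $(\s_i\s_j)^k 1_a=1_a$; this comes from the Coxeter-type presentation of $\Wg(\rsC)$ in Theorem~\ref{th:Coxgr}. Once this is granted, both parts of the lemma reduce to straightforward bookkeeping with the cyclic structure of $\End(a)$.
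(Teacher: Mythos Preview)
Your argument is correct. Part~(i) is the same as the paper's: both invoke Lemma~\ref{le:ocd} and the triviality of $\End(a)$.

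For part~(ii) you take a slightly different path. The paper argues directly by counting:
\[
|A|\cdot|\End(a)|=|\Hom(\Wg(\cC))1_a|=2|R^a_+|,
\]
where the first equality uses connectedness (every $\Hom(a,b)$ is an $\End(a)$--coset) and the second is the dihedral count coming from Theorem~\ref{th:Coxgr}. Since $|\End(a)|=1$, this gives $|A|=|R^a|$ in one stroke. You instead route the argument through Proposition~\ref{pr:mij}, writing $m^a_{i,j}=m\,|A|/2$ and then identifying $m$ with the order of the generator $(\s_i\s_j)^{|A|/2}1_a$ of $\End(a)$. That identification is correct, but note that justifying it fully amounts to the same dihedral count $|\Hom(\Wg(\cC))1_a|=2m^a_{i,j}$ that the paper uses directly; it is not literally written out in the proof of Proposition~\ref{pr:mij}, which only records the weaker conclusion $m\in\{1,2,3,4,6\}$. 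You rightly flag this as the one step requiring care and attribute it to Theorem~\ref{th:Coxgr}. So your approach is sound, just slightly less direct: you pass through Proposition~\ref{pr:mij} and then have to unpack what $m$ means, whereas the paper's counting argument bypasses that detour.
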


\begin{proof}
  Since $\cC $ is simply connected, $\End (a)=\{1\}$ for all $a\in A$.
  By Lemma~\ref{le:ocd} the object change diagram of $\cC $ is a cycle.
  Now
  \[ |\Hom (\Wg (\cC ))1_a|=|A|\cdot |\End (a)| \]
  since $\cC $ is connected. Again,
  $\cC $ is simply connected, and hence $|A|=|\Hom (\Wg (\cC ))1_a|$.
  This is equal to $2|R^a_+|$ by Thm.~\ref{th:Coxgr}, since $|I|=2$.
\end{proof}

\begin{propo}\label{pr:rstoc}
  Assume that $I=\{i,j\}$
  and that $\rsC $ is a finite irreducible root system of type $\cC $.
  Let $a\in A$ and $n=|R^a_+|$.
  Let $a_1,a_2,\dots $, $a_{2n}\in A$ and $c_1,c_2,\dots,c_{2n}\in \ndZ $
  such that
  \begin{align}\label{eq:acdef}
    \begin{aligned}
      a_{2r-1}=&(\rfl _j\rfl _i)^{r-1}(a),&
      a_{2r}=&\rfl _i(\rfl _j\rfl _i)^{r-1}(a),\\
      c_{2r-1}=&-c^{a_{2r-1}}_{i j}, &
      c_{2r}=&-c^{a_{2r}}_{j i}
    \end{aligned}
  \end{align}
  for all $r\in \{1,2,\ldots ,n\}$.
  Then $(c_1,c_2,\ldots ,c_n)\in \cAp $, $c_{n+r}=c_r$
  for all $r\in \{1,2,\ldots ,n\}$, and $\rfl _j(a_{2n})=a$.
\end{propo}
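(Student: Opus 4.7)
The plan is to introduce auxiliary vectors $\gamma_k\in\ndZ^I$ satisfying the same linear recurrence as the columns of $\eta(c_1)\cdots\eta(c_k)$, and to read off all three claims of the proposition from them. Writing $i_k=i$ for $k$ odd and $i_k=j$ for $k$ even (so that $a_{k+1}=\rfl_{i_k}(a_k)$ and $c_k=-c^{a_k}_{i_k i_{k+1}}$), set $w_k=\s_{i_k}^{a_k}\cdots\s_{i_1}^{a_1}\in\Hom(a_1,a_{k+1})$ with $w_0=\id$, and define $\gamma_k:=w_{k-1}^{-1}(\al_{i_k})\in\ndZ^I$. A short computation using $w_{k-1}^{-1}=w_{k-2}^{-1}\s_{i_{k-1}}^{a_{k-1}}$ (as endomorphisms of $\ndZ^I$), $\s_{i_k}^{a_k}(\al_{i_{k+1}})=\al_{i_{k+1}}+c_k\al_{i_k}$, and the identity $i_{k+1}=i_{k-1}$ yields the recurrence $\gamma_{k+1}=c_k\gamma_k-\gamma_{k-1}$, which extends consistently to $\gamma_0=-\al_j$.

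By Prop.~\ref{pr:mij}, $n=m^{a_1}_{i,j}=|R^{a_1}_+|$, so Thm.~\ref{th:Coxgr} forces $w_{2n}=\id_{a_1}$; in particular $a_{2n+1}=a_1$, hence $\rfl_j(a_{2n})=a$. The decisive geometric input is $\gamma_n=\al_j$ and $\gamma_{n+1}=-\al_i$. Minimality of $n$ makes the alternating expression for $w_n$ reduced, so by the Weyl-groupoid analogue of the inversion-set theorem (implicit in \cite{a-HeckYam08}) the vectors $\gamma_1,\ldots,\gamma_n$ are $n$ distinct elements of $R^{a_1}_+$, hence exhaust it. The induced linear bijection $-w_n:R^{a_1}_+\to R^{a_{n+1}}_+$ preserves extremality in the positive cone, so $w_n\{\al_i,\al_j\}=\{-\al_i,-\al_j\}$ as sets; the value of $\det w_n=(-1)^n$ selects $-\id$ if $n$ is even and the sign-swap $\al_i\leftrightarrow\al_j$ if $n$ is odd. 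A short case analysis based on $\gamma_n=w_n^{-1}(-\al_{i_n})$ and $\gamma_{n+1}=w_n^{-1}(\al_{i_{n+1}})$ yields $\gamma_n=\al_j$ and $\gamma_{n+1}=-\al_i$ in both parities.

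Comparing with $M_k:=\eta(c_1)\cdots\eta(c_k)$, its first column $u_k$ satisfies $u_k=c_k u_{k-1}-u_{k-2}$ with $u_0=\al_i$ and $u_{-1}=-\al_j$ (identifying $\ndZ^I\cong\ndZ^2$ via the basis $(\al_i,\al_j)$); this is the same recurrence with the same initial data as $\gamma_{k+1}$, so $u_k=\gamma_{k+1}$. For $k<n$ the column is a positive root and thus has nonnegative coordinates, while at $k=n$ the first column is $\gamma_{n+1}=-\al_i$ and the second is $-u_{n-1}=-\gamma_n=-\al_j$, giving $M_n=-\id$. Irreducibility of $\rsC$ forces $C^{a_1}$ to be indecomposable, hence $c^a_{ij}\le-1$ for $i\ne j$ and $c_k\ge 1$. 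This proves $(c_1,\ldots,c_n)\in\cAp$.

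For the periodicity $c_{n+r}=c_r$, set $\tilde\gamma_k:=-\gamma_{n+k}$: linearity of the recurrence gives $\tilde\gamma_{k+1}=c_{n+k}\tilde\gamma_k-\tilde\gamma_{k-1}$ with $(\tilde\gamma_0,\tilde\gamma_1)=(-\al_j,\al_i)=(\gamma_0,\gamma_1)$. A one-line cross-multiplication using $c_k\ge 1$ shows that any recurrence of this form produces a strictly slope-increasing sequence of vectors with nonnegative entries; since both $(\gamma_k)_{k=1}^n$ and $(\tilde\gamma_k)_{k=1}^n$ enumerate $R^{a_1}_+$ in the unique slope ordering from $\al_i$ to $\al_j$, they coincide. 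Hence $\gamma_{n+k}=-\gamma_k$, and reading the recurrence at any pair of linearly independent consecutive $\gamma$'s forces $c_{n+r}=c_r$. I anticipate the main obstacle to be the inversion-set step for the Weyl groupoid; once $\gamma_n$ and $\gamma_{n+1}$ are pinned down, everything else is a formal consequence.
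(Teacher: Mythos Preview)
Your argument is correct in outline but follows a different route from the paper, and one step needs tightening.

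\textbf{Comparison with the paper.} The paper works directly with matrix identities: it sets $\theta_{2r-1}=\s_i^{a_{2r-1}}\tau$, $\theta_{2r}=\tau\s_j^{a_{2r}}$, observes $\theta_r=\eta(c_r)$, and then reads off $\theta_n\cdots\theta_1=-\id$ from the fact that the longest element sends simple roots to their negatives (together with $\det\theta_r=1$). Positivity of first columns comes from the same lemmas in \cite{a-HeckYam08} you invoke. For periodicity the paper shifts the base point by \emph{one}: applying the construction at $(j,a_2)$ gives $(c_2,\ldots,c_{n+1})\in\cAp$, whence $\eta(c_2)\cdots\eta(c_n)=-\eta(c_1)^{-1}=-\eta(c_{n+1})^{-1}$, so $c_1=c_{n+1}$, and one inducts. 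Your approach is more geometric: you package everything into the inversion-root vectors $\gamma_k$, identify them with the first columns $u_{k-1}$ of $M_{k-1}=\eta(c_1)\cdots\eta(c_{k-1})$ via the shared recurrence, and deduce $(c_1,\ldots,c_n)\in\cAp$ from $\gamma_n=\al_j$, $\gamma_{n+1}=-\al_i$. For periodicity you argue that $(\gamma_k)$ and $(\tilde\gamma_k)=(-\gamma_{n+k})$ are both the unique slope-ordered enumeration of $R^{a_1}_+$. The paper's periodicity argument is shorter and avoids the need to analyze $\tilde\gamma_k$ at all; your approach has the virtue of making the root-system geometry explicit.

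\textbf{A gap in the periodicity step.} The sentence ``a one-line cross-multiplication using $c_k\ge 1$ shows that any recurrence of this form produces a strictly slope-increasing sequence of vectors with nonnegative entries'' is not correct as stated: the determinant computation $\det(\gamma_{k-1}\mid\gamma_k)=1$ indeed gives strict slope increase, but nonnegativity of the entries does \emph{not} follow from $c_k\ge1$ alone (already for $c=(1,1,1)$ one has $\gamma_4=-\al_i$). What you actually need is that $\tilde\gamma_1,\ldots,\tilde\gamma_n\in R^{a_1}_+$, and this is not a consequence of the recurrence. The clean fix is to apply the inversion-set theorem a second time, to the alternating word $v_n=\s_{i_{2n}}^{a_{2n}}\cdots\s_{i_{n+1}}^{a_{n+1}}\in\Hom(a_{n+1},a_1)$: its inversion roots $\delta_k=v_{k-1}^{-1}(\al_{i_{n+k}})$ exhaust $R^{a_{n+1}}_+$, and since $\gamma_{n+k}=w_n^{-1}(\delta_k)$ with $w_n^{-1}(R^{a_{n+1}}_+)=R^{a_1}_-$ (as $w_n$ is the longest element), you get $\tilde\gamma_k=-\gamma_{n+k}\in R^{a_1}_+$. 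With this in hand your slope-ordering argument goes through. Alternatively, you could simply adopt the paper's one-step shift for periodicity, which sidesteps the issue entirely.
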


\begin{proof}
  For all $r\in \ndZ $ let $i_r\in I$ such that
  $i_r=i$ for $r$ odd and $i_r=j$ for $r$ even.
  Let $\theta _{2r-1}=\s _i^{a_{2r-1}}\tau $,
  $\theta _{2r}=\tau \s _j^{a_{2r}} \in \SLZ $ for all
  $r\in \{1,\ldots ,n\}$. Then $\theta _r=\eta (c_r)$ for all
  $r\in \{1,\ldots ,2n\}$. Since $\rsC $ is irreducible,
  $c_r>0$ for all $r$.
  By \cite[Lemmas\,4,7]{a-HeckYam08},
  $\ell (\s ^{a_n}_{i_n}\cdots \s ^{a_2}_{i_2}\s ^a_{i_1})=n$. Hence
  \[
    \s ^{a_n}_{i_n}\cdots \s ^{a_2}_{i_2}\s ^a_{i_1}(\{\al _1,\al _2\})
    =\{-\al _1,-\al _2\}
  \]
  by \cite[Lemma\,8(iii)]{a-HeckYam08}.
  Thus $\theta _n\cdots \theta _2\theta _1(\{\al _1,\al _2\})
  =\{-\al _1,-\al _2\}$, and since $\det \theta _r=1$ for all $r$, we conclude
  that $\theta _n\cdots \theta _2\theta _1=-\id $. Hence
  $(c_n,\ldots ,c_2,c_1)\in \cA $.

  Clearly, if $2\le r\le n$, then the first column of
  $\theta _n\cdots \theta _{r+1}\theta _r$ has
  nonnegative entries if and only if $\s _{i_n}\cdots \s _{i_{r+1}}\s
  _{i_r}^{a_r}(\al _{i_{r-1}})$ is a positive root. The latter is true by
  \cite[Lemma\,4]{a-HeckYam08}, and hence $(c_n,\ldots ,c_2,c_1)\in \cAp $.
  Then $(c_1,c_2,\ldots ,c_n)\in \cAp $ by Prop.~\ref{pr:Aprop}(3).

  Replacing in the construction $a$ by $a_2$ and $i$ by $j$,
  we obtain that $(c_2,\ldots ,c_n,c_{n+1})\in \cAp $.
  Then $\eta (c_1)^{-1}=-\eta (c_2)\cdots \eta (c_n)=\eta (c_{n+1})^{-1}$, and
  hence $c_1=c_{n+1}$. Thus $c_{n+r}=c_r$ for all $r\in \{1,2,\ldots ,n\}$
  by induction on $r$. Finally,
  $\rfl _j(a_{2n})=(\rfl _j\rfl _i)^n(a)=a$ by (R4).
\end{proof}

The construction in Prop.~\ref{pr:rstoc} associates to any pair $(i,a)\in
I\times A$ a sequence $(c_1,c_2,\ldots ,c_n)\in \cAp $.
This defines a map
\[ \Phi :I\times A\to \cAp .\]
Prop.~\ref{pr:rstoc} gives immediately, that
\begin{align}
  \Phi (j,a)=(c_n,c_{n-1},\dots ,c_1), \quad
  \Phi (j,\rfl _i(a))=(c_2,c_3,\dots ,c_n,c_1).
  \label{eq:Phiim}
\end{align}
Thus, by definition of $\sim $,
the induced map $\Phi :I\times A\to \cAp /{\sim }$ is
constant. But we can say more.

\begin{theor}\label{th:ctors}
  Let $n\in \ndN $ and $c=(c_1,c_2,\ldots ,c_n)\in \cAp $. Then there is a
  unique (up to equivalence)
  finite connected simply connected irreducible root system $\rsC $ of rank
  two such that $c\in \mathrm{Im}\,\Phi $. 
\end{theor}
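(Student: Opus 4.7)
The proof splits naturally into \emph{uniqueness} and \emph{existence}, with uniqueness essentially automatic from Prop.~\ref{pr:rstoc} together with the cited fact from [p-CH08] that a Cartan scheme admits at most one finite root system up to equivalence, and existence requiring an explicit construction.

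\textbf{Uniqueness.} Suppose $\rsC $ and $\rsC'$ are both such root systems with $c\in \mathrm{Im}\,\Phi $ and $c\in \mathrm{Im}\,\Phi'$. Since both are simply connected, Lemma~\ref{le:ocdscrs} says their object change diagrams are cycles of length $2|R^a_+|=2n$, where $n$ is the length of $c$. The Cartan matrices along these cycles are completely determined by $c$ through the formulas $c_{2r-1}=-c_{ij}^{a_{2r-1}}$, $c_{2r}=-c_{ji}^{a_{2r}}$ (extended periodically via $c_{n+r}=c_r$, as given by Prop.~\ref{pr:rstoc}) combined with (C2). Hence the two underlying Cartan schemes are equivalent, and the uniqueness of finite root systems on a given Cartan scheme then yields $\rsC \cong \rsC'$.

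\textbf{Existence.} Given $c=(c_1,\ldots ,c_n)\in \cAp $, extend periodically by $c_{n+r}=c_r$ and build a Cartan scheme $\cC _c$ with $I=\{i,j\}$, $A=\{a_1,\ldots ,a_{2n}\}$ (indices mod $2n$), reflections $\rfl _i(a_{2r-1})=a_{2r}$, $\rfl _j(a_{2r})=a_{2r+1}$, and Cartan matrices $c_{ij}^{a_{2r-1}}=-c_{2r-1}$, $c_{ji}^{a_{2r}}=-c_{2r}$ (the remaining off-diagonal entries being forced by (C2)). Axioms (C1), (C2) are immediate and $\cC _c$ is connected by construction. To see it is simply connected, observe by Lemma~\ref{le:ocd} that $\End (a_1)$ is generated by $(\s _j\s _i)^n 1_{a_1}$. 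Using $\s _i^{a_{2r-1}}=\eta (c_{2r-1})\tau $ and $\s _j^{a_{2r}}=\tau \eta (c_{2r})$ (exactly as in the proof of Prop.~\ref{pr:rstoc}), the intermediate $\tau $'s cancel and one computes
\[ (\s _j\s _i)^n 1_{a_1}=\tau \,\eta (c_{2n})\cdots \eta (c_1)\,\tau =\tau \bigl(\eta (c_n)\cdots \eta (c_1)\bigr)^2\tau =\tau (-\id )^2\tau =\id , \]
using the periodic extension together with Prop.~\ref{pr:Aprop}(3), which gives $\eta (c_n)\cdots \eta (c_1)=-\id $.

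\textbf{Root system.} With $i_{2k-1}=i$, $i_{2k}=j$ I would declare
\[ R^{a_1}_+=\{\,\s _{i_1}^{a_1}\s _{i_2}^{a_2}\cdots \s _{i_{r-1}}^{a_{r-1}}(\al _{i_r})\,|\,r=1,\ldots ,n\,\}, \]
extend $R^a_+$ for $a\neq a_1$ by applying the appropriate $\s _k$'s along a path from $a_1$ to $a$, and put $R^a=R^a_+\cup (-R^a_+)$. Positivity of each generator is Prop.~\ref{pr:Aprop}(1) applied to the first column of $\eta (c_1)\cdots \eta (c_r)$, giving (R1); (R2) is immediate; (R3) is built in; and (R4) is the identity $(\rfl _j\rfl _i)^n(a_1)=a_1$, which is encoded in the cycle. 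Finally $\Phi (i,a_1)=c$ by construction, so $c\in \mathrm{Im}\,\Phi $. The main technical obstacle I anticipate is verifying that $R^a$ is well-defined (independent of the chosen path from $a_1$) and that $|R^{a_l}_+|=n$ for every $l$; both points reduce, via the cyclic and reflective invariance of $\cAp $ (Prop.~\ref{pr:Aprop}(2),(3)) and simply-connectedness, to the observation that the sequence $\Phi (k,a_l)$ obtained from any other base pair agrees with $c$ up to the $\sim$-action, so that the analogous construction starting at $a_l$ produces the same set of roots.
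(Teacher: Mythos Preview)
Your proposal follows the same overall strategy as the paper: uniqueness via Lemma~\ref{le:ocdscrs} and Prop.~\ref{pr:rstoc}, existence via the explicit Cartan scheme on a $2n$-cycle with Cartan entries read off from $c$. Two points of comparison are worth noting.

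First, your proof of simple connectedness is a direct computation of the generator $(\s _j\s _i)^n1_{a_1}$ of $\End(a_1)$, whereas the paper postpones this until after the root system axioms are verified and then argues by counting, $|\Hom(\Wg(\cC))1_{a_1}|=2n=|A|$. Both are valid; your route has the advantage of establishing simple connectedness before defining $R^a$, which actually dissolves one of your self-declared ``technical obstacles'': once $\End(a_1)=\{1\}$, the transport definition of $R^a$ is automatically path-independent, so there is nothing to check there.

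Second, for the root sets the paper writes down closed formulas for every $R^{a_r}$ separately (in terms of products $\eta(c_r)\cdots\eta(c_{r+l-1}){1\choose 0}$) and then checks (R1)--(R4) by direct computation. Your transport-from-$a_1$ approach is equivalent, but it forces you to argue (R1) at every object via cyclic invariance, and your remark that ``(R3) is built in'' only holds if you stick to the transport definition, while your (R1) argument tacitly switches to the local real-root description at $a_l$; you need one line saying these two descriptions coincide. The paper's explicit formulas make that identification and the cardinality statement $|R^{a_r}_+|=n$ more transparent: if two listed roots coincided one would get $\eta(c_r)\cdots\eta(c_{r+l-1}){1\choose 0}=\pm{1\choose 0}$ for some $1\le l\le n-1$, which contradicts Prop.~\ref{pr:Aprop}(1),(2). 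That short argument is the missing piece in your sketch; once inserted, your version and the paper's are essentially the same proof.
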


\begin{proof}
  Assume that $c\in \cAp $, $\rsC $ is a
  connected irreducible root system of rank two, $i\in I$, and $a\in A$ such
  that $\Phi (i,a)=c$. If $\rsC $ is simply connected, then
  by Lemma~\ref{le:ocdscrs} and Prop.~\ref{pr:rstoc}
  the object change diagram of $\rsC $ is a cycle
  and $|A|=2n$. The Cartan matrices $C^a$ and the sets $R^a$, where $a\in A$,
  are then
  uniquely determined by the construction in Prop.~\ref{pr:rstoc}. Thus $\rsC
  $ is uniquely determined. We describe $\rsC $ explicitly.
  
  Let $I=\{i,j\}$ and $A=\{a_1,\ldots ,a_{2n}\}$ a set with $2n$ elements.
  Define $\rfl _i,\rfl _j:A\to A$ such that
  \begin{align} \label{eq:rhodef}
    \begin{aligned}
      \rfl _i(a_{2r-1})=&a_{2r}, & \rfl _i(a_{2r})=&a_{2r-1},\\
      \rfl _j(a_{2r})=&a_{2r+1}, & \rfl _j(a_{2r+1})=&a_{2r}
    \end{aligned}
  \end{align}
  for all $r\in \{1,2,\ldots ,n\}$, where $a_{2n+1}=a_1$.
  Then $\rfl _i^2=\rfl _j^2=\id $. Let $c_{ln+r}=c_r$ for all $r\in
  \{1,2,\ldots ,n\}$ and $l\in \ndZ $, and define
  \begin{align}
    C^{a_{2r-1}}=
    \begin{pmatrix}
      2 & -c_{2r-1} \\ -c_{2r-2} & 2
    \end{pmatrix}, \quad
    C^{a_{2r}}=
    \begin{pmatrix}
      2 & -c_{2r-1} \\ -c_{2r} & 2
    \end{pmatrix}
    \label{eq:cCseq}
  \end{align}
  for all $r\in \{1,2,\ldots ,n\}$.
  Since $c_r\in \ndN $ for all $r\in \{1,2,\ldots ,2n\}$,
  the matrices $C^{a_r}$ satisfy (M1) and (M2). Since also (C1)
  and (C2) hold, $\cC =\cC (I,A,(\rfl _i,\rfl _j),(C^a)_{a\in A})$
  is a connected Cartan scheme.

  Now define
  \begin{align*}
    R^{a_{2r-1}}=&\left\{ \pm \eta (c_{2r-1})\eta (c_{2r})\cdots
    \eta (c_{2r-2+l})
    \begin{pmatrix} 1 \\ 0 \end{pmatrix} \,\big|\,0\le l\le n-1 \right \},\\
    R^{a_{2r}}=&\left\{ \pm \tau \eta (c_{2r})\eta (c_{2r+1})\cdots
    \eta (c_{2r+l-1})
    \begin{pmatrix} 1 \\ 0 \end{pmatrix} \,\big|\,0\le l\le n-1 \right \}
  \end{align*}
  for all $r\in \{1,2,\dots ,n\}$.
  Note that $|R^a_+|=n$ for all $a\in A$.
  Indeed, otherwise $\eta (c_r)\eta (c_{r+1})\cdots \eta (c_{r+l-1})
  {1 \choose 0} = {1 \choose 0}$ for some $r\in \{1,2,\ldots ,2n\}$ and
  $l\in \{1,2,\ldots ,n-1\}$. Then 
  \[ \eta (c_{r+1})\eta (c_{r+2})\cdots \eta (c_{r+l-1})
  {1 \choose 0} = \eta (c_r)^{-1}{1 \choose 0} = {0 \choose -1}, \]
  a contradiction to
  Prop.~\ref{pr:Aprop}(1),(2).

  Axiom~(R1) is fulfilled by Prop.~\ref{pr:Aprop}(2).
  Let $r\in \{1,2,\ldots ,2n\}$.
  Equation $\eta (c_r)\eta (c_{r+1})\cdots \eta (c_{r+n-1})=-\id $
  implies, that
  \[ \eta (c_r)\eta (c_{r+1})\cdots \eta (c_{r+n-2})=-\eta
  (c_{r+n-1})^{-1}, \]
  and hence $\pm \al _1,\pm \al _2\in R^{a_r}$.
  Since $\tau ,\eta (l)\in \SLZ $ for all $l\in \ndZ $, we get (R2).
  (R4) holds by Eq.~\eqref{eq:rhodef},
  since $|R^a_+|=n$ for all $a\in A$.
  
  Now we prove (R3). Let $r\in \{1,2,\ldots ,2n\}$.
  Then $\s _i^{a_r}=\eta (-c^{a_r}_{ij})\tau =\tau \eta (-c^{a_r}_{ij})^{-1}$
  by Eqs.~\eqref{eq:cCseq}, \eqref{eq:etatau}.
  If $r$ is odd, then
  \begin{align*}
    &\s _i^{a_r}(R^{a_r})\\
    &=
    \tau \eta (c_r)^{-1}\left(\left\{ \pm \eta (c_r)\eta (c_{r+1})\cdots
    \eta (c_{r+l-1}){1\choose 0}  \,\big|\,
      0\le l\le n-1 \right \}\right)\\
      & \quad \subset R^{a_{r+1}}=R^{\rfl _i(a_r)},
  \end{align*}
  and if $r$ is even, then
  \begin{align*}
    &\s _i^{a_r}(R^{a_r})\\
    &=
    \eta (c_{r-1})\tau \left(\left\{ \pm \tau \eta (c_r)\eta (c_{r+1})\cdots
    \eta (c_{r+l-1}){1\choose 0}  \,\big|\,
      0\le l\le n-1 \right \}\right)\\
      & \quad \subset R^{a_{r-1}}=R^{\rfl _i(a_r)}.
  \end{align*}
  Similarly, $\s _j^{a_r}=\tau \eta (c_{r-1})$ for odd $r$ and
  $\s _j^{a_r}=\eta (c_r)^{-1}\tau $ for even $r$. Hence
  $\s _j^{a_r}(R^{a_r})\subset R^{\rfl _j(a_r)}$,
  (R3) holds, and $\rsC $
  is a finite irreducible root system of type $\cC $. It is simply connected,
  since $(c_1,\dots ,c_n)\in \cA $ and 
  $|\Hom (\Wg (\cC ))1_{a_1}|=2n=|A|$.
  Finally, $\Phi (i,a_1)=(c_1,\ldots ,c_n)$ because of
  Eqs.~\eqref{eq:acdef}, \eqref{eq:rhodef}, and \eqref{eq:cCseq}.
\end{proof}

\begin{corol}\label{co:minusone}
  Assume that there is a finite root system $\rsC $ of type $\cC $.
  Then there are $a\in A$ and $i,j\in I$ with $i\not=j$ such that
  $c^a_{ij}=0$ or $c^a_{ij}=-1$.
\end{corol}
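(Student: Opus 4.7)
The plan is to split into two cases according to whether $\rsC $ is irreducible, and in the irreducible case extract the desired Cartan entry from a sequence in $\cAp $ via Pringsheim's obstruction.

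\textbf{Case 1: $\rsC $ is not irreducible.} By the remark preceding Thm.~\ref{th:Coxgr} (citing \cite[Prop.\,4.6]{p-CH08}), reducibility forces $C^a$ to be a decomposable generalized Cartan matrix for every $a\in A$. Since $|I|=2$, this means $c^a_{ij}=0$ for any $a\in A$ and any $i\not=j$, so the conclusion holds trivially.

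\textbf{Case 2: $\rsC $ is irreducible.} Pick any $i\in I$ and $a\in A$ and apply Prop.~\ref{pr:rstoc}. With $n=|R^a_+|$, this yields a sequence $(c_1,\ldots ,c_n)\in \cAp $ with $c_{2r-1}=-c^{a_{2r-1}}_{ij}$ and $c_{2r}=-c^{a_{2r}}_{ji}$ as in Eq.~\eqref{eq:acdef}. By the definition of $\cAp $ one has $\eta (c_1)\cdots \eta (c_n)=-\id $, a matrix of order $2$. Since this product has finite order, Cor.~\ref{co:Pringsheim} forbids all entries $c_r$ from being $\ge 2$; so there exists $r\in \{1,\ldots ,n\}$ with $c_r=1$. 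Tracing back through Eq.~\eqref{eq:acdef}, this equality reads $c^{a_{2r-1}}_{ij}=-1$ (if $r$ is odd) or $c^{a_{2r}}_{ji}=-1$ (if $r$ is even), producing the required pair of indices and object.

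The case distinction is forced because the machinery of Sect.~\ref{sec:crs2} (in particular Prop.~\ref{pr:rstoc}) is set up only for irreducible root systems; the reducible case must be handled separately but is immediate from decomposability of $C^a$ in rank two. The main structural input is really Cor.~\ref{co:Pringsheim}: without it one could not rule out sequences $(c_1,\ldots ,c_n)$ with all $c_r\ge 2$, and the conclusion would fail. I do not foresee any technical obstacles; once the two cases are written down, each is a one-line application of a result already proved in the paper.
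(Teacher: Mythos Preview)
Your proof is correct and follows essentially the same route as the paper: split into the reducible case (where all off-diagonal Cartan entries vanish by \cite[Prop.\,4.6]{p-CH08}) and the irreducible case (where Prop.~\ref{pr:rstoc} yields a sequence in $\cAp$ and Cor.~\ref{co:Pringsheim} forces some entry to equal $1$). One cosmetic slip: in your trace-back through Eq.~\eqref{eq:acdef} the index should be $a_r$ rather than $a_{2r-1}$ (the formula there is $c_{2s-1}=-c^{a_{2s-1}}_{ij}$, so when your chosen $r$ is odd you get $c^{a_r}_{ij}=-1$), but this does not affect the argument.
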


\begin{proof}
  If $\rsC $ is not irreducible, then $C^a_{ij}=0$ for all $a\in A$ and
  $i,j\in I$ with $i\not=j$, see the end of Sect.~\ref{sec:CS}.
  Otherwise Prop.~\ref{pr:rstoc} gives that the negatives of the entries of
  the Cartan matrices of $\cC $ give rise to a
  sequence $(c_1,\dots ,c_n)\in \cA ^+$.
  By Cor.~\ref{co:Pringsheim}, this sequence has an entry $1$, and the
  corollary is proven.
\end{proof}

\begin{remar}
  The assumption in Cor.~\ref{co:minusone} can be weakened for example by
  requiring only that $\Wg (\cC )$ is finite. We don't work out the details,
  since we are mainly interested in Cartan schemes admitting (finite)
  root systems.
\end{remar}

We are going to give a very effective algorithm
to decide if our given
connected Cartan scheme $\cC $ admits a finite irreducible
root system. The central notions towards this will be the
characteristic sequences and centrally symmetric Cartan schemes.
Our algorithm can also be used to get a more precise classification of
root systems of rank two, for example in form of explicit lists for a
given number of objects.

\begin{defin}\label{de:centsym}
  Assume that the object change diagram of $\cC $ is a cycle.
  Let $i\in I$, $a\in A$, and define
  $a_1,\ldots ,a_{|A|}\in A$ and $c_1, \dots , c_{|A|}\in \ndN _0$ by
  \begin{align*}
    a_{2k-1}=& (\rfl _j\rfl _i)^{k-1}(a), &
    a_{2k}=& (\rfl _i\rfl _j)^{k-1}\rfl _i(a),\\
    c_{2k-1}=&-c^{a_{2k-1}}_{ij}, &
    c_{2k}=&-c^{a_{2k}}_{ji}
  \end{align*}
  for all $k\in \{1,2,\dots ,|A|/2\}$, where $I=\{i,j\}$.
  Then $(c_1,c_2,\dots ,c_{|A|})$ is called the \textit{characteristic
  sequence of} $\cC $ with respect to $i$ and $a$.
  The Cartan scheme $\cC $ is termed \textit{centrally symmetric}, if
  $c_k=c_{k+|A|/2}$ for all $k\in \{1,2,\dots ,|A|/2\}$.
  In this case we write also $(c_1,c_2,\dots ,c_{|A|/2})^2$ for
  $(c_1,c_2,\dots ,c_{|A|})$.
\end{defin}

\begin{remar}
  Let $(c_1,c_2,\dots ,c_{|A|})$ be the characteristic
  sequence of $\cC $ with respect to $i$ and $a$.
  Then the characteristic sequences with respect to $j$ and $a$ and $i$ and
  $\rfl _i(a)$, respectively, are
  $(c_{|A|},c_{|A|-1},\dots ,c_1)$ and
  $(c_1,c_{|A|},c_{|A|-1},\dots ,c_3,c_2)$, respectively.
  Thus if $\cC $ is centrally
  symmetric with respect to $i$ and $a$, it is also centrally symmetric
  with respect to $j$ and $a$ and $i$ and $\rfl _i(a)$, respectively.
  Since $\cC $ is connected,
  this means that $\cC $ being centrally symmetric is independent of
  the choice of $i\in I$ and $a\in A$.
\end{remar}

\begin{remar}\label{re:charseqcA}
  Characteristic sequences must not be confused with elements of $\cA $ or
  $\cAp $. Their precise relationship will not be needed in the sequel, so we
  don't work it out in detail.
\end{remar}

\begin{remar}\label{re:charseqtocs}
  Let $n\in \ndN $ and let $c=(c_1,c_2,\dots ,c_{2n})$ be a sequence of
  positive integers.  By axioms (M1) and (C2) there is
  a unique (up to equivalence) connected Cartan scheme $\cC $ with
  object change diagram a cycle, such that the characteristic sequence of $\cC
  $ (with respect to some $i\in I$ and $a\in A$) is $c$.
\end{remar}

\begin{remar}\label{re:chseqsc}
  Assume that $\cC $ is simply connected,
  and that there exists a finite irreducible root system of type $\cC $.
  Then $\cC $ is centrally symmetric by Lemma~\ref{le:ocdscrs}
  and Prop.~\ref{pr:rstoc}.
\end{remar}

\begin{remar}\label{re:chseqcov}
  Assume that the object change diagram of $\cC $ is a cycle.
  By Lemma~\ref{le:ocd} and Prop.~\ref{pr:cover} the object change diagram of
  an $n$-fold covering $\cC '$ of $\cC $, where $n\in \ndN $, is a cycle.
  The characteristic sequence of $\cC '$
  is just the $n$-fold repetition of the characteristic sequence of $\cC $.
  Thus an $n$-fold covering of $\cC $ is centrally symmetric if and
  only if $\cC $ is centrally symmetric or $n$ is even.
\end{remar}

\begin{lemma}
  Assume that there exists a
  finite irreducible root system of type $\cC $.
  Suppose that the object change diagram of $\cC $ is a chain.
  Then there is a unique double covering
  $\cC '$
  of $\cC $ and a finite irreducible root system of type $\cC '$
  such that the object change
  diagram of $\cC '$ is a cycle.
  \label{le:chain}
\end{lemma}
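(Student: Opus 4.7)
The plan is to exploit the correspondence in Prop.~\ref{pr:cover} between coverings of $\cC$ and subgroups of $\End(a)$ (up to conjugation), combined with Lemma~\ref{le:ocd}, which characterizes when the object change diagram is a cycle in terms of the parity of elements of $\End(a)$.

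First I would fix $a\in A$. By Lemma~\ref{le:ocd}, since the object change diagram of $\cC$ is a chain, $\End(a)$ contains an odd element, so the determinant homomorphism $\det\colon\End(a)\to\{\pm 1\}$ is surjective. Let $U=\ker\det\subset\End(a)$: this is a (normal) subgroup of index exactly two, consisting of the even elements. Since $\cC$ admits a finite root system it satisfies (C3), so Prop.~\ref{pr:cover}(2), applied to $U$, produces a covering $\pi\colon\cC'\to\cC$ satisfying (C3) with $F_\pi(\End(b'))=U$ for some $b'\in A'$ with $\pi(b')=a$ and $|\pi^{-1}(b)|=2$ for all $b\in A$; this $\cC'$ is unique (up to equivalence) among coverings with these properties. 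Prop.~\ref{pr:covrs}(2) then pulls back the finite root system on $\cC$ to one of type $\cC'$ via $R'^{a'}=R^{\pi(a')}$; it is irreducible because the Cartan matrices of $\cC'$ coincide with those of $\cC$ and are therefore indecomposable.

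To see that the object change diagram of $\cC'$ is a cycle, I would again invoke Lemma~\ref{le:ocd}: for every $b''\in A'$, the restriction $F_\pi|_{\End(b'')}$ is injective by Prop.~\ref{pr:cover}(1.A), and its image is conjugate to $U$ by Prop.~\ref{pr:cover}(1.B). Since the set of even elements is closed under conjugation, this image consists only of even elements, hence so does $\End(b'')$.

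The main delicate point will be uniqueness. Any double covering $\cC''\to\cC$ (satisfying (C3)) whose object change diagram is a cycle must, by the same application of Lemma~\ref{le:ocd}, have $F_{\pi''}(\End(b''))$ consisting entirely of even elements; as this subgroup has index two in $\End(a)$ and $U$ also has index two, the two subgroups coincide. The uniqueness clause in Prop.~\ref{pr:cover}(2) then identifies $\cC''$ with $\cC'$ up to equivalence, and uniqueness of the finite root system of a given Cartan scheme \cite{p-CH08} yields uniqueness of the accompanying root system.
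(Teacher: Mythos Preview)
Your argument is correct and follows essentially the same approach as the paper: identify the index-two subgroup $U\subset\End(a)$ of even elements, apply Prop.~\ref{pr:cover}(2) to build the covering, use Lemma~\ref{le:ocd} to verify the cycle condition, and invoke Prop.~\ref{pr:covrs}(2) for the root system. The only cosmetic difference is that the paper describes $U$ concretely by first exhibiting explicit reflection generators $\s_i^a$ and $(\s_j\s_i)^{|A|-1}\s_j^a$ of $\End(a)$ and taking $U$ to be generated by their product, whereas you define $U$ directly as $\ker\det$; both descriptions give the same subgroup.
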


\begin{proof}
  By assumption there exists $a\in A$ and $i\in I$ such that $\rfl _i(a)=a$.
  Then $\End (a)$ is generated by $\s _i^a$ and
  $\tau ^a=(\s _j\s _i)^{|A|-1}\s _j^a$, where $I=\{i,j\}$.
  Since $\s _i^a$, $\tau ^a$
  are reflections, for the subgroup $U=\langle \s _i^a\tau ^a\rangle \subset
  \End (a)$ we obtain that $[\End (a):U]=2$, and $U$ consists of even elements.
  By Prop.~\ref{pr:cover}(2) there exists a unique
  double covering $\cC '$ of $\cC $ satisfying Axiom~(C3) such that $\End
  (a')\simeq U$ for all $a'\in A'$. By Lemma~\ref{le:ocd}
  the object change diagram of $\cC '$ is a cycle. The uniqueness of $\cC '$
  holds, since $U$ is the unique subgroup of $\End (a)$ consisting of even
  elements and satisfying $[\End (a):U]=2$.
  The existence of a finite irreducible
  root system of type $\cC '$ follows from Prop.~\ref{pr:covrs}(2).
\end{proof}

\begin{remar}\label{re:chaincover}
  If $\cC '$ is a Cartan scheme with object change
  diagram a cycle, then $\cC '$ is the double covering of a Cartan scheme with
  object change diagram a chain if and only if there exist $i\in I'$,
  $a\in A'$, such that the characteristic sequence of $\cC '$ with respect to
  $i$ and $a$ is of the form
  $(c_1,\dots ,c_n,c_{n+1},c_n,c_{n-1},\dots ,c_2)$ with $n=|A'|/2$ and
  $c_1,\dots,c_{n+1}\in \ndN _0$.
\end{remar}

\begin{lemma}
  Assume that there exists a
  finite irreducible root system of type $\cC $.
  Suppose that the object change diagram of $\cC $ is a cycle, and that $\cC $
  is not centrally symmetric.
  Then there is a unique double covering $\cC '$ of $\cC $ which admits a
  (finite irreducible) root system.
  The Cartan scheme $\cC '$ is centrally symmetric.
  \label{le:cycle}
\end{lemma}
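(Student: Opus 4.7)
The plan is to reduce everything to a counting of subgroups of $\End(a)$, using the assumption ``not centrally symmetric'' to force $\End(a)$ to have even order. Throughout, fix $a\in A$ and write $I=\{i,j\}$; since $\cC$ admits a root system, $\cC$ satisfies (C3).

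First I would identify $\End(a)$. Because the object change diagram is a cycle, the proof of Lemma~\ref{le:ocd} shows that $\End(a)$ is the cyclic group generated by $(\s_j\s_i)^{|A|/2}1_a$. Combining Prop.~\ref{pr:mij} with Thm.~\ref{th:Coxgr}, this generator has order $m\in\{1,2,3,4,6\}$; so $\End(a)\cong\ndZ/m$. Next, by the corollary following Def.\ of simply connected, there is a universal covering $\tilde\cC$ of $\cC$, and Prop.~\ref{pr:covrs}(2) transports the given finite irreducible root system up to a finite irreducible root system on $\tilde\cC$. By Prop.~\ref{pr:cover}(2) applied to $U=\{1\}$, $\tilde\cC$ is an $m$-fold covering of $\cC$. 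Rem.~\ref{re:chseqsc} forces $\tilde\cC$ to be centrally symmetric, whereas Rem.~\ref{re:chseqcov} says that an $m$-fold covering of $\cC$ is centrally symmetric if and only if $\cC$ is centrally symmetric or $m$ is even. Since $\cC$ is not centrally symmetric by hypothesis, $m$ must be even.

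Now I would produce $\cC'$. A cyclic group of even order has a unique subgroup $U$ of index $2$; apply Prop.~\ref{pr:cover}(2) to this $U$ to obtain a double covering $\cC'$ of $\cC$ satisfying (C3) with $\End(b')\cong U$ for objects $b'$ lying over $a$, and unique among such coverings up to equivalence. Prop.~\ref{pr:covrs}(2) then endows $\cC'$ with a finite irreducible root system (irreducibility passes through because the Cartan matrices, and hence their indecomposability, are preserved by a covering). Finally, Rem.~\ref{re:chseqcov} applied with $n=2$ shows that $\cC'$ is centrally symmetric.

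For uniqueness, I would argue that any double covering $\cC''$ of $\cC$ admitting a root system necessarily satisfies (C3) by the remark following Def.~\ref{de:RSC}. By Prop.~\ref{pr:cover}(1)(B), $F_{\pi''}(\End(b''))$ sits inside $\End(a)$ as a subgroup of index $2$ (the index equals the degree of the covering, by Prop.~\ref{pr:cover}(2)); since $\End(a)$ is abelian the conjugation in (1.B) is trivial, so only one such subgroup exists, namely $U$. Prop.~\ref{pr:cover}(2) then shows that $\cC''$ is equivalent to $\cC'$. The main obstacle is really just the first paragraph: pinning down the parity of $|\End(a)|$ from the failure of central symmetry, which requires the round trip through the universal covering.
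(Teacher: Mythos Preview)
Your proposal is correct and follows essentially the same route as the paper: identify $\End(a)$ as cyclic, pass to the universal covering and use Remarks~\ref{re:chseqsc} and~\ref{re:chseqcov} to force $|\End(a)|$ to be even, then apply Prop.~\ref{pr:cover}(2) and Prop.~\ref{pr:covrs}(2), with uniqueness coming from the fact that any covering admitting a root system satisfies~(C3). Your write-up just makes explicit a few points the paper leaves terse (the value of $m$, the unique index-$2$ subgroup, the irrelevance of conjugation in an abelian $\End(a)$).
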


\begin{proof}
  Since the object change diagram of $\cC $ is a cycle, $\End (a)$ is cyclic
  for all $a\in A$.
  The universal covering of $\cC $ is centrally symmetric by
  Rem.~\ref{re:chseqsc}.
  Since $\cC $ is not centrally symmetric, $|\End (a)|$ is even
  by Rem.~\ref{re:chseqcov} and Prop.~\ref{pr:cover}(2).
  By Prop.~\ref{pr:cover}(2) there is a unique double
  covering $\cC '$ of $\cC $ satisfying (C3).
  It admits a finite irreducible root system of type $\cC '$
  by Prop.~\ref{pr:covrs}(2).
  All coverings of $\cC $ admitting a root system fulfill (C3).
  Hence $\cC '$ is the only double covering of $\cC $ admitting a root
  system. This $\cC '$ is centrally symmetric by Rem.~\ref{re:chseqcov}.
\end{proof}

\begin{remar}\label{re:cyclecover}
  Let $\cC '$ be a Cartan scheme with object change
  diagram a centrally symmetric cycle, and $n=|A'|$. Then
  $\cC '$ is the double covering of a Cartan scheme with
  object change diagram a not centrally symmetric cycle if and only if
  $n\in 4\ndN $, and
  with respect to one (equivalently, all) pair $(i',a')\in I'\times A'$
  the characteristic sequence of $\cC '$ is not of the form
  \[ (c_1,c_2,\dots ,c_{n/4},c_1,c_2,\dots ,c_{n/4})^2, \]
  where $c_1,\dots,c_{n/4}\in \ndN _0$.
\end{remar}

In order to decide if a given connected Cartan scheme admits a finite
root system, Lemmas~\ref{le:chain} and \ref{le:cycle} allow to concentrate
on centrally symmetric Cartan schemes. Further, since the classification of
finite root systems with at most three objects is known, see \cite{p-CH08},
we may assume that
the Cartan scheme has at least $4$ objects.

For any matrix $C$, let $C^\trans $ denote the transpose of $C$.

\begin{theor}\label{th:cycleclass}
  Let $\cC =\cC (I,A,(\rfl _i)_{i\in I},(C^a)_{a\in A})$
  be a connected centrally symmetric Cartan scheme with $|A|\ge 4$.
  
  (1) Assume that the characteristic sequence of $\cC $ contains $0$. Then
  $c^a_{ij}=0$ for all $a\in A$ and $i,j\in I$ with $i\not=j$.
  Moreover, $\cC $ admits a finite
  root system if and only if $|A|=4$.

  (2) If all entries of the characteristic sequence of $\cC $ are at least
  two, then $\cC $ does not admit a finite root system.

  (3) Assume that the characteristic sequence of $\cC $ is of the form
  \[ c=(c_1,1,c_3,c_4,\dots, c_{|A|/2})^2. \]
  If $c_1=1$ or $c_3=1$, then there is a finite root system of type $\cC $
  if and only if $|A|=6$ and $c_1=c_3=1$.
  If $c_1>1$ and $|A|=4$, then
  there is a finite root system of type $\cC $ if and only if
  $c_1\in \{2,3\}$.
  If $c_1>1$, $c_3>1$, and $|A|\ge 6$,
  then there is a finite root system of type $\cC $
  if and only if the Cartan scheme with object change diagram a cycle with
  $|A|-2$ edges and with characteristic sequence
  \begin{align}
    (c_1-1,c_3-1,c_4,\dots, c_{|A|/2})^2
    \label{eq:redseq}
  \end{align}
  admits a finite root system.
\end{theor}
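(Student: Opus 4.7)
The plan is to dispatch the three parts separately: (1) and (2) follow quickly from earlier results, while (3) carries the bulk of the work via Lemma~\ref{le:cApcontr} combined with an analysis of the universal cover.

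For Part (1), a zero entry of the characteristic sequence gives some $c^a_{ij} = 0$ with $i \ne j$; axiom (M2) yields $c^a_{ji} = 0$, and (C2) combined with connectedness propagates $c^b_{ij} = c^b_{ji} = 0$ to every $b \in A$. With every Cartan matrix equal to $2I$, the reflections $\s_i^a$, $\s_j^a$ fix the coordinate axes, forcing $R^a_+ = \{\al_i, \al_j\}$ and hence $m^a_{i,j} = 2$. Prop~\ref{pr:mij} in the cycle case then gives $m \cdot |A|/2 = 2$ with $m \in \{1,2,3,4,6\}$; combined with $|A| \ge 4$ this yields $|A| = 4$, $m = 1$. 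Conversely, declaring $R^a = \{\pm \al_i, \pm \al_j\}$ at each of four objects directly verifies axioms (R1)--(R4). Part (2) is immediate from Cor~\ref{co:minusone}: a finite root system forces some $-c^a_{ij} \in \{0, 1\}$, contradicting all characteristic-sequence entries being $\ge 2$.

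For Part (3), the strategy is to pass to the universal cover $\tilde{\cC}$ and invoke Thm~\ref{th:ctors}. By Lemma~\ref{le:ocdscrs}, Rem~\ref{re:chseqsc}, and Rem~\ref{re:chseqcov}, $\cC$ admits a finite irreducible root system if and only if the $k$-fold concatenation of $(c_1, \ldots, c_{|A|/2})$ lies in $\cAp$ for some $k \ge 1$ (the covering index $k = |\tilde{A}|/|A|$). In the subcase $c_1 = 1$ or $c_3 = 1$, this $\cAp$-sequence contains the pattern $(c_1, 1, c_3)$ with a '1' at position $2$ and another '1' at position $1$ or $3$, so Lemma~\ref{le:cApcontr}(3) forces the total length to be $3$ and the sequence to be $(1,1,1)$; this gives $k \cdot |A|/2 = 3$ with $|A| \ge 4$, leaving only $k = 1$, $|A| = 6$, $c_1 = c_3 = 1$. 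In the subcase $|A| = 4$, $c_1 \ge 2$, the half-sequence is $(c_1, 1)$ and its $k$-fold concatenation lies in $\cAp$ iff $(\eta(c_1)\eta(1))^k = -\id$ (plus non-negativity of the intermediate first columns). Since $\eta(c_1)\eta(1)$ has determinant $1$ and trace $c_1 - 2$, Lemma~\ref{le:12346} restricts $c_1 \in \{2, 3, 4\}$; ruling out $c_1 = 4$ (trace $2$ forces order $1$ or $\infty$, neither compatible with $-\id$) leaves $c_1 \in \{2, 3\}$, both verified by direct computation ($k = 2$ with order $4$, resp.\ $k = 3$ with order $6$).

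In the final subcase $|A| \ge 6$ with $c_1, c_3 \ge 2$, the $k$ occurrences of $(c_1, 1, c_3)$ inside the concatenation are non-overlapping (since $|A|/2 \ge 3$), so Lemma~\ref{le:cApcontr}(1),(2) applied simultaneously to all of them gives: the concatenation lies in $\cAp$ if and only if the $k$-fold concatenation of $(c_1 - 1, c_3 - 1, c_4, \ldots, c_{|A|/2})$ does. By Rem~\ref{re:charseqtocs} the latter is exactly the $\cAp$-criterion for the Cartan scheme with characteristic sequence \eqref{eq:redseq}, yielding the claimed equivalence. The main obstacle is the bookkeeping in this last reduction: the covering index $k$ must remain the same on both sides. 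This is handled automatically because Lemma~\ref{le:cApcontr}(1),(2) is an iff statement for each individual reduction, so the same $k$ produces $\cAp$-sequences on both sides, and the non-overlap condition $|A|/2 \ge 3$ ensures the $k$ simultaneous reductions do not interfere.
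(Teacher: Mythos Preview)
Your proof is correct and follows the same overall architecture as the paper: Parts~(1) and~(2) are dispatched via (M2)/(C2)/(R4) and Cor.~\ref{co:minusone}, and Part~(3) is handled by passing to the universal cover and combining Lemma~\ref{le:cApcontr} with Thm.~\ref{th:ctors} (together with Prop.~\ref{pr:rstoc} and Prop.~\ref{pr:covrs}, which you use implicitly in your ``$k$-fold concatenation lies in $\cAp$'' characterization). The one noteworthy difference is in the first two subcases of~(3): the paper appeals to external results (\cite[Lemma~4.8]{p-CH08} for $c_1=1$ or $c_3=1$, and \cite[Thm.~3.3]{p-CH08} for $|A|=4$), whereas you treat these internally via Lemma~\ref{le:cApcontr}(3) and a direct trace computation with Lemma~\ref{le:12346}. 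This buys self-containment at essentially no cost. Your explicit packaging of the main reduction as an iff between $k$-fold concatenations is also a bit cleaner than the paper's ``one direction, then the converse goes the same way'' presentation, but the underlying mechanism---iterating Lemma~\ref{le:cApcontr}(2) together with the cyclic rotations of Prop.~\ref{pr:Aprop}(2)---is identical. One small wording point: the reductions are not literally ``simultaneous'' (Lemma~\ref{le:cApcontr} only treats position~$2$), but your non-overlap observation for $|A|/2\ge 3$ is exactly what guarantees that, after each reduction and rotation, the next block still has the form $(c_1,1,c_3,\ldots)$ with $c_1,c_3\ge 2$, so the sequential argument goes through.
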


\begin{proof}
  (1) follows from (M2), (C2), and (R4), and (2) from Cor.~\ref{co:minusone}.

  (3) If $c_1=1$ or $c_3=1$, then there exists $a\in A$ such that
  $c^a_{ij}=c^a_{ji}=-1$, where $I=\{i,j\}$. Then \cite[Lemma\,4.8]{p-CH08}
  gives that $m^a_{i,j}=3$ and $c_r=1$ for all $r\in \{1,3,4,\dots,|A|/2\}$.
  By (R4) we get $|A|=6$.

  Assume next that $c_1>1$ and $|A|=4$.
  Then $C^a=C^b$ for all $a,b\in A$, and hence $\cC $ admits a finite root
  system if and only if $C^a$ is of finite type and (R4) holds (cf.
  \cite[Thm.\,3.3]{p-CH08}), that is, $c_1\in \{2,3\}$.

  Finally, assume that $c_1>1$, $c_3>1$, $|A|\ge 6$, and
  $\cC $ admits a finite root system.
  By Prop.~\ref{pr:covrs},
  the universal covering $\cC '$ of $\cC $ admits a finite root system.
  Hence $A'$ is finite by (C1) and (R4). Therefore
  $\End (a)\subset \Hom (\Wg (\cC ))$ is finite for all $a\in A$ by
  Eq.~\eqref{eq:piprop2}.  Let $m=|\End (a)|$.
  Rem.~\ref{re:chseqcov}
  and Lemma~\ref{le:ocdscrs} tell that
  the object change diagram of $\cC '$ is a centrally symmetric cycle, and
  the characteristic sequence of $\cC '$ is an $m$-fold repetition of $c$.
  Let
  \[ \tilde{c}=(c_1,1,c_3,c_4,\dots, c_{|A|/2}). \]
  By Prop.~\ref{pr:rstoc} 
  the $m$-fold repetition of $\tilde{c}$ is an element of $\cAp $.
  Since $|A|\ge 6$, Lemma~\ref{le:cApcontr}(2) gives that
  the $m$-fold repetition of
  \[ \tilde{c}'=(c_1-1,c_3-1,c_4,\dots, c_{|A|/2}) \]
  is in $\cAp $.
  Let $\cC ''$ be the connected simply connected
  Cartan scheme which corresponds to the
  $m$-fold repetition of $\tilde{c}'$ via Thm.~\ref{th:ctors}.
  It admits a finite root system.
  Now $\cC ''$ is the $m$-fold covering of a Cartan scheme $\cC '''$
  with characteristic sequence given in Eq.~\eqref{eq:redseq}.
  Hence Prop.~\ref{pr:covrs} gives that $\cC '''$ admits a finite root
  system.
  
  We have shown that if $\cC $ admits a finite root system,
  then also $\cC '''$. The proof of the converse goes in the same way, and we
  are done.
\end{proof}

\begin{examp}
Consider the connected Cartan scheme $\cC $ of rank two with $4$ objects,
object change diagram a cycle and
characteristic sequence $(5,1,2,2)$.
To check that $\cC $ admits a finite root system, consider the double
covering $\cC '$
corresponding to the characteristic sequence $(5,1,2,2)^2$.
By Prop.~\ref{pr:covrs}, $\cC $ admits a finite root system if and only if
$\cC '$ does.
Thm.~\ref{th:cycleclass}(3) allows to replace $\cC '$ by the Cartan scheme
with characteristic sequence
$(4,1,2)^2$ respectively $(3,1)^2$. Thus $\cC $ admits a finite root
system.

If we start with the characteristic sequence
$(5,1,2,3)$ for $\cC $, then the analogous
arguments produce the characteristic sequences $(5,1,2,3)^2$,
$(4,1,3)^2$ and $(3,2)^2$, and then $\cC $ does not admit a finite root
system by Thm.~\ref{th:cycleclass}(2).
\end{examp}

\section{Bounds}
\label{sec:bounds}

Let
$\cC =\cC (I,A,(\rfl _i)_{i\in I},(C^a)_{a\in A})$ be a
connected Cartan scheme of rank two
admitting a finite irreducible root system of type $\cC $.
Then $A$ is finite by (C1) and (R4).
Let $-q=-q(\cC )$ denote the sum of all non-diagonal
entries of the Cartan matrices of $\cC$, and
$h=|\End (a)|$ for an $a\in A$. Then $|\End (b)|=h$ for all $b\in A$, since
$\cC $ is connected.

\begin{theor}\label{th:h}
  We have $h(6|A|-q)=24$ and
  \[ |R^a_+|=\frac{h|A|}{2}=\frac{12|A|}{6|A|-q}. \]
\end{theor}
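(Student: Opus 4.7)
The plan is to reduce everything to the simply connected case, where the structural results of Sect.~\ref{sec:seq} (in particular Cor.~\ref{co:CartanSum}) apply directly, and then pull back via a covering. Let $\pi :\cC '\to \cC $ be the universal covering of $\cC $, which exists by Cor.~\ref{co:minusone}'s setup and Prop.~\ref{pr:cover}(2). By Prop.~\ref{pr:covrs}(2), $\cC '$ admits a finite irreducible root system $\rsC '$ with $R'^{a'}=R^{\pi (a')}$ for all $a'\in A'$. Since $\cC '$ is simply connected, $\End (a')=\{1\}$ for all $a'\in A'$, and Eq.~\eqref{eq:piprop2} gives $|A'|=h|A|$. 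Moreover, since $C'^{a'}=C^{\pi (a')}$ and every $a\in A$ has $h$ preimages, the invariant $q'=q(\cC ')$ satisfies $q'=hq$.

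Next I would establish the middle equation. By Lemma~\ref{le:ocdscrs} the object change diagram of $\cC '$ is a cycle with $|R'^{a'}|=2|R'^{a'}_+|$ vertices, so $|A'|=2|R'^{a'}_+|$. Combining with $R'^{a'}=R^{\pi (a')}$ yields
\[ |R^a_+|=|R'^{a'}_+|=\tfrac{|A'|}{2}=\tfrac{h|A|}{2}. \]
(Alternatively, this follows directly from Prop.~\ref{pr:mij} together with a short analysis of $\End (a)$ in the cycle and chain cases.)

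For the first equation I would exploit the characteristic sequence of $\cC '$. By Rem.~\ref{re:chseqsc} the sequence is centrally symmetric, so of the form $(c'_1,\ldots ,c'_{|A'|/2})^2$. Prop.~\ref{pr:rstoc} (applied with $n=|R'^{a'}_+|=|A'|/2$) identifies $(c'_1,\ldots ,c'_{|A'|/2})$ with an element of $\cAp $, whence by Cor.~\ref{co:CartanSum} its entries sum to $3(|A'|/2-2)$. Therefore the full characteristic sequence has total
\[ \sum _{k=1}^{|A'|}c'_k=2\cdot 3\bigl(\tfrac{|A'|}{2}-2\bigr)=3|A'|-12. \]
On the other hand I would compute this total in terms of $q'$. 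Every object of $A'$ occurs exactly once in the sequence $a'_1,\ldots ,a'_{|A'|}$, contributing either its $(i,j)$- or its $(j,i)$-entry. Because $a'_{2k-1},a'_{2k}$ lie in the same $\rho '_i$-orbit and $a'_{2k},a'_{2k+1}$ in the same $\rho '_j$-orbit, Axiom~(C2) forces
\[ \sum _{k=1}^{|A'|}c'_k=-\tfrac{1}{2}\sum _{a'\in A'}(c^{a'}_{ij}+c^{a'}_{ji})=\tfrac{q'}{2}. \]
Equating the two expressions gives $q'=6|A'|-24$; substituting $|A'|=h|A|$ and $q'=hq$ yields $h(6|A|-q)=24$, and then $|R^a_+|=h|A|/2=12|A|/(6|A|-q)$.

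The main technical subtlety will be the orbit-counting identity $\sum c'_k=q'/2$; everything else is either invocation of prior results or straightforward substitution. Note that this approach works uniformly whether the object change diagram of $\cC $ itself is a chain or a cycle, because the universal cover is always a cycle (Lemma~\ref{le:ocdscrs}), so one does not need to split into cases and invoke Lemma~\ref{le:chain} separately.
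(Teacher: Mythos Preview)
Your proof is correct and follows essentially the same route as the paper: pass to the universal covering $\cC'$ (where $|A'|=h|A|$ and $q'=hq$), invoke Prop.~\ref{pr:rstoc} together with Cor.~\ref{co:CartanSum} to obtain the relation between $q'$ and $|A'|$, and use Lemma~\ref{le:ocdscrs} for $|R^a_+|=|A'|/2$. The paper compresses your orbit argument for $\sum c'_k=q'/2$ into the single line ``$q(\cC')/4=3(h|A|/2-2)$'', but the content is the same. One cosmetic point: your citation of ``Cor.~\ref{co:minusone}'s setup'' for the existence of the universal covering is off --- that corollary concerns Cartan entries equal to $0$ or $-1$; you want the (unlabelled) corollary immediately following the definition of ``simply connected'', together with the remark that (C3) holds because $\cC$ admits a root system.
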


\begin{proof}
  The universal covering $\cC '$ of $\cC $ has $h|A|$ objects
  by Eq.~\eqref{eq:piprop2}, and $q(\cC ')/4=3(h|A|/2-2)$
  by Prop.~\ref{pr:rstoc} and Cor.~\ref{co:CartanSum}.
  Since $q(\cC ')=hq(\cC )$, we obtain that
  $hq=6(h|A|-4)$. Hence $h(6|A|-q)=24$.
  Lemma~\ref{le:ocdscrs} tells that $|R^a_+|=h|A|/2$. This yields the claim.
\end{proof}

\begin{remar}
  Prop.~\ref{pr:mij} and Thm.~\ref{th:h} give that
  $h\in \{1,2,3,4,6\}$ if the object change diagram of $\cC $ is a cycle, and
  $h/2\in \{1,2,3,4,6\}$ if it is a chain. But this result could have been
  obtained much easier. Nevertheless, Thm.~\ref{th:h} gives a restriction for
  $q=6|A|-24/h$ for given number $|A|$ of objects in a finite irreducible
  root system.
\end{remar}

Next we give sharp bounds for the entries of the Cartan matrices.
%

\begin{propo}\label{pr:bound}
  Assume that $|A|\ge 2$.
  Let $c\le 0$ be an entry of $C^a$ for some $a\in A$.
  If the object change diagram is a cycle resp. a chain,
  then $|c|\le |A|+1$ resp. $|c|\le 2|A|+1$.
\end{propo}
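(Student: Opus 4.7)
My plan is to reduce the chain case to the cycle case and then establish the cycle case via a combinatorial lemma about periodic elements of $\cAp $.

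For the chain case, Lemma~\ref{le:chain} provides a double covering $\pi :\cC '\to \cC $ whose object change diagram is a cycle, with $|A'|=2|A|$ and again admitting a finite irreducible root system. Since $C^{a'}=C^{\pi (a')}$ by Definition~\ref{de:cover}, every off-diagonal entry of any $C^a$ occurs as an entry of some $C^{a'}$, so applying the cycle bound to $\cC '$ will yield $|c|\le |A'|+1=2|A|+1$ as required.

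For the cycle case, I pass to the universal covering $\cC '$ of $\cC $, which admits a finite irreducible root system by Proposition~\ref{pr:covrs}(2) and is centrally symmetric by Remark~\ref{re:chseqsc}; set $h=|\End (a)|$, so $|A'|=h|A|$ by Proposition~\ref{pr:cover}(2). By Remark~\ref{re:chseqcov} the characteristic sequence of $\cC '$ is the $h$-fold repetition of the characteristic sequence $(c_1,\ldots ,c_{|A|})$ of $\cC $, and if $\cC $ itself is not centrally symmetric the same remark forces $h$ to be even. Proposition~\ref{pr:rstoc} then provides an element $(d_1,\ldots ,d_{h|A|/2})\in \cAp $ (the first half of the repeated sequence) such that every off-diagonal entry of every $C^a$ of $\cC $ appears (up to sign) among the $d_i$, and $d_{i+|A|}=d_i$ whenever the indices fit. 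Thus the bound $|c|\le |A|+1$ reduces to the following combinatorial statement, which I propose to prove by induction on $p$: \emph{if $(d_1,\ldots ,d_{kp})\in \cAp $ satisfies $d_{i+p}=d_i$ for $1\le i\le (k-1)p$, then $d_i\le p+1$ for all $i$}.

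For the key lemma, the base case $p=1$ is immediate: Corollary~\ref{co:Pringsheim} forbids $d\ge 2$, while $\eta (1)^3=-\id $ gives $d=1$ and $kp=3$. For $p\ge 2$ some $d_j=1$ by Corollary~\ref{co:Pringsheim}; after cyclic rotation via Proposition~\ref{pr:Aprop}(2) (which preserves the periodicity) I may assume $d_2=1$, and either $kp=3$ (whence the sequence is $(1,1,1)$) or $d_1,d_3\ge 2$ by Lemma~\ref{le:cApcontr}(3). Periodicity then yields $d_{1+\ell p},d_{3+\ell p}\ge 2$ with $d_{2+\ell p}=1$ for $\ell =0,\ldots ,k-1$, so I can apply Lemma~\ref{le:cApcontr}(2) sequentially at these ``$1$'' positions; after $k$ contractions the sequence becomes the $k$-fold repetition of $(d_1-1,d_3-1,d_4,\ldots ,d_p)$, which lies in $\cAp $, has period $p-1$, and by inductive hypothesis has every entry $\le p$. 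This bounds $d_1,d_3\le p+1$ and $d_i\le p$ for $i\ge 4$, so the maximum of the original period is $\le p+1$. Applied to the $\cAp $-sequence above with period dividing $|A|$, this closes the cycle case.

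The main obstacle will be the bookkeeping in the sequential contraction step: after contracting the first ``$1$'' at position $2$, the next ``$1$'' shifts to position $p+1$ and must still be flanked by two entries $\ge 2$ for Lemma~\ref{le:cApcontr}(2) to apply. This is in fact automatic, because those two neighbours come from the \emph{next} copy of the period and hence still equal the untouched $d_1$ and $d_3$; one only needs to check that this pattern persists through all $k$ contractions and that the final sequence is exactly the $k$-fold repetition of $(d_1-1,d_3-1,d_4,\ldots ,d_p)$. Once this is verified the argument is quite direct.
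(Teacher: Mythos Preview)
Your overall strategy is the same as the paper's: reduce chains to cycles by a double cover, then bound the cycle case by an induction that repeatedly contracts a ``$1$'' via Lemma~\ref{le:cApcontr}. The paper packages this induction inside Theorem~\ref{th:cycleclass} (first proving $|c|\le |A|/2+1$ for centrally symmetric cycles, then doubling), whereas you extract it as a standalone lemma about periodic elements of $\cAp$; these are two phrasings of the same argument.

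There is, however, a genuine gap in your inductive step at $p=2$. After the first contraction the next ``$1$'' (originally at position $p+2$) sits at new position $p+1$, with neighbours at original positions $p+1$ and $p+3$. For $p\ge 3$ both of these lie outside $\{1,2,3\}$ and are indeed the untouched $d_1,d_3$ from the next period, exactly as you say. But for $p=2$ the left neighbour is original position $3$, which \emph{was} decremented by the first contraction: it now equals $d_3-1=d_1-1$, not $d_1$. If $d_1=2$ (and $(2,1,2,1)\in\cAp$ does occur) this neighbour is $1$, so Lemma~\ref{le:cApcontr}(2) no longer applies and your ``$k$ contractions'' scheme breaks down; relatedly, your formula ``$k$-fold repetition of $(d_1-1,d_3-1,d_4,\dots,d_p)$'' has the wrong length when $p=2$. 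The fix is easy: treat $p=2$ as a second base case. For $(d_1,1)^k\in\cAp$ the matrix $\eta(d_1)\eta(1)$ has trace $d_1-2$ and finite order, forcing $d_1\in\{1,2,3\}\le p+1$, or alternatively appeal to the $|A|=4$ clause of Theorem~\ref{th:cycleclass}(3). With this adjustment your induction goes through for $p\ge 3$ as written.

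A minor point: to apply your lemma with $p=|A|$ you need the length $h|A|/2$ to be a multiple of $|A|$, i.e.\ $h$ even. You correctly note that $h$ is even when $\cC$ is not centrally symmetric, but when $\cC$ \emph{is} centrally symmetric and $h$ is odd you should instead invoke the lemma with $p=|A|/2$ (the true period), which gives the stronger bound $|A|/2+1$ anyway.
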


\begin{proof}
Assume first that
the object change diagram of $\cC $ is a cycle.
If $|A|\ge 4$ and
$\cC $ is centrally symmetric,
then Thm.~\ref{th:cycleclass}(2),(3) yields by induction on $|A|$, that
$|c|\le |A|/2+1$.
If $\cC $ is not centrally symmetric,
then by Lemma~\ref{le:cycle} there exists a double covering of $\cC $
which is centrally symmetric. Hence $|c|\le |A|+1$.

If the object change diagram of $\cC $ is a chain, then
by Lemma~\ref{le:chain} there exists a double covering of $\cC $
which has a cycle as object change diagram.
Hence $|c|\le 2|A|+1$.
\end{proof}


\begin{propo} \label{pr:entries}
  For all $n\ge 1$ there exist finite connected
  irreducible root systems
  $\rsC $ of rank two with $|A|=2n$ and object change diagram a cycle
  resp. $|A|=n$ and object change diagram a chain
  such that $-(2n+1)$ is an entry in a Cartan matrix
  $C^a$, $a\in A$.
\end{propo}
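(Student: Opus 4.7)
The plan is to produce, for each $n\ge 1$, a single connected cycle Cartan scheme $\cC^{(n)}$ with $|A|=2n$ that is simultaneously the double covering of a connected chain Cartan scheme $\mathcal{E}^{(n)}$ with $|A|=n$, and to show that $\cC^{(n)}$ admits a finite irreducible root system whose Cartan matrices contain the entry $-(2n+1)$. Since every covering $\pi$ satisfies $C^{\pi(a)}=C^a$, and since Prop.~\ref{pr:covrs}(1) descends root systems along coverings, this entry will appear in $\mathcal{E}^{(n)}$ too, with an inherited finite irreducible root system. Both assertions then follow from one construction.

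For $n=1$, take $\mathcal{E}^{(1)}$ to be the scheme with $|A|=1$ and Cartan matrix $\bigl(\begin{smallmatrix}2 & -3\\ -1 & 2\end{smallmatrix}\bigr)$ of type $G_2$; its classical root system is finite and irreducible, and its unique double covering (a cycle of length two, by Lemma~\ref{le:chain}) yields $\cC^{(1)}$. For $n\ge 2$, define the palindromic characteristic sequence
\[
c^{(n)} \;=\; \bigl(2n+1,\,1,\,\underbrace{2,\ldots,2}_{n-2},\,3,\,\underbrace{2,\ldots,2}_{n-2},\,1\bigr)
\]
of length $2n$, and let $\cC^{(n)}$ be the unique cycle Cartan scheme with this characteristic sequence (Rem.~\ref{re:charseqtocs}). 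Its palindromic shape matches the form in Rem.~\ref{re:chaincover}, so $\cC^{(n)}$ is indeed the double covering of a chain scheme $\mathcal{E}^{(n)}$ with $|A|=n$. For $n\ge 2$ the first entry of $c^{(n)}$ exceeds its middle entry $3$, so $\cC^{(n)}$ is not centrally symmetric, and by Rem.~\ref{re:chseqcov} its own double covering $\mathcal{D}^{(n)}$ is centrally symmetric with characteristic sequence $(c^{(n)})^2$ of length $4n$.

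The core step is to verify that $\mathcal{D}^{(n)}$ admits a finite root system; then Prop.~\ref{pr:covrs}(1) transports it down to $\cC^{(n)}$ and further to $\mathcal{E}^{(n)}$. I would do this by iterated application of Thm.~\ref{th:cycleclass}(3), organized in three stages. First, $n-2$ successive reductions using $c_2=1$ and $c_3=2$ bring the half-sequence $c^{(n)}$ to $(n+3,1,3,2,\ldots,2,1)$ of length $n+2$; one further reduction using $c_3=3$ yields $(n+2,2,\ldots,2,1)$ of length $n+1$; a cyclic shift brings the trailing $1$ into position two, after which one more reduction produces $(1,n+1,2,\ldots,2)$ of length $n$. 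Second, iterate the descent $(1,m,2,\ldots,2)\leadsto(2,1,m,2,\ldots,2)\leadsto(1,m-1,2,\ldots,2)$ for $m=n+1,n,\ldots,4$, by repeatedly cycling to expose a $1$ in position two and applying Thm.~\ref{th:cycleclass}(3). Third, at $m=3$ we arrive at the half-sequence $(1,3)$ with $|A|=4$; a final cyclic shift gives $(3,1)$ with $c_1=3\in\{2,3\}$, so the $|A|=4$ clause of Thm.~\ref{th:cycleclass}(3) supplies the finite root system that drives the whole chain of iff-reductions backwards.

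The main obstacle is the bookkeeping: at each transition one must check that a suitable cyclic shift places a $1$ in position two (central symmetry is automatic, since it is preserved by any shift), that $c_1,c_3\ge 2$, and that $|A|\ge 6$ whenever further reduction is invoked. All these verifications follow routinely from the explicit shape of the intermediate half-sequences, but care is needed at the junctions between the three stages, and separately at the degenerate values $n=2$ (where the first and second stages are trivial) and $n=3$ (where the iteration of the second stage consists of a single step). Irreducibility holds throughout because every entry of $c^{(n)}$ is strictly positive, so every off-diagonal Cartan entry is nonzero; and the entry $-(2n+1)$ appears at the object of $\cC^{(n)}$ indexed by the first position of the characteristic sequence, and transfers to $\mathcal{E}^{(n)}$ via the identity $C^{\pi(a)}=C^a$.
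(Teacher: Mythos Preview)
Your proof is correct and takes essentially the same approach as the paper: the sequence $c^{(n)}$ is a cyclic shift of the paper's half-sequence $(3,2,\dots,2,1,2n+1,1,2,\dots,2)$, and the covering structure $\mathcal{D}^{(n)}\to\cC^{(n)}\to\mathcal{E}^{(n)}$ coincides with the paper's. The only difference is the reduction route through Thm.~\ref{th:cycleclass}(3)---you consume the $2$'s on one side first, then the other, whereas the paper alternates sides and keeps the symmetric shape $(3,2,\dots,2,1,m,1,2,\dots,2)^2$ at every stage---but both paths terminate at $(3,1)^2$.
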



\begin{proof}
  For $n=1$ the claim follows from \cite[Prop.\,5.2]{p-CH08}.

Thm.~\ref{th:cycleclass} tells that for all $n\ge 2$ the Cartan scheme
$\cC _n$ with $4n$ objects, object change diagram a cycle, and
characteristic sequence
\begin{align}
  (3,\underbrace{2,2,\dots ,2}_{n-2 \text{ times}},1,2n+1,1,
  \underbrace{2,2,\dots ,2}_{n-2 \text{ times}})^2
  \label{eq:symmcharseq}
\end{align}
admits a finite irreducible root system with $|A|=4n$.
Indeed, if $n=2$, then using Thm.~\ref{th:cycleclass}(3)
we can transform the sequence $(3,1,5,1)^2$ first to $(2,4,1)^2$.
By changing the reference object, the latter is equivalent to
$(4,1,2)^2$, and
using Thm.~\ref{th:cycleclass}(3) we may reduce it to $(3,1)^2$.
If $n>2$, then 
using Thm.~\ref{th:cycleclass}(3)
we may transform the sequence in \eqref{eq:symmcharseq}
in two steps, first to
\[ (3,\underbrace{2,2,\dots ,2}_{n-3 \text{ times}},1,2n,1,
\underbrace{2,2,\dots ,2}_{n-2 \text{ times}})^2, \]
and then to
\[ (3,\underbrace{2,2,\dots ,2}_{n-3 \text{ times}},1,2n-1,1,
\underbrace{2,2,\dots ,2}_{n-3 \text{ times}})^2. \]
By induction on $n$ we obtain that $\cC _n$ admits a finite irreducible
root system.
By Rem.~\ref{re:cyclecover},
$\cC _n$ is the double covering of a Cartan scheme $\cC '_n$ with
$2n$ objects, object change diagram a cycle, and characteristic sequence
\[  (3,\underbrace{2,2,\dots ,2}_{n-2 \text{ times}},1,2n+1,1,
  \underbrace{2,2,\dots ,2}_{n-2 \text{ times}}). \]
By Prop.~\ref{pr:covrs}, $\cC _n'$ admits a finite irreducible root system
$\rsC '$, and $\rsC '$ is such a root system we are looking for.
By Rem.~\ref{re:chaincover}, $\cC '_n$ is the double covering of a Cartan
scheme $\cC ''_n$ with $n$ objects and object change diagram a chain.
By Prop.~\ref{pr:covrs}, $\cC ''_n$ admits a finite irreducible root system
$\rsC ''$, and the proposition is proven.
\end{proof}

\begin{corol}
  Any $c\in\NN$ occurs as the negative of an entry of a Cartan matrix of a
  finite connected irreducible root system of rank two.
\end{corol}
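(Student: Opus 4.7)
The plan is to exhibit, for every $c\in\NN$, an explicit finite connected irreducible root system of rank two whose Cartan matrices contain $-c$ as an off-diagonal entry. For $c=1$ one can simply take the ordinary rank-two root system of type $A_2$ (regarded as a Cartan scheme with $|A|=1$). The heart of the argument is, for each $c\ge 2$, to consider the Cartan scheme $\cC_c$, unique up to equivalence by Rem.~\ref{re:charseqtocs}, that is connected, has object change diagram a cycle, and has characteristic sequence
\[
  \left(c,\,1,\,\underbrace{2,2,\ldots,2}_{c-2\text{ times}}\right)^2,\qquad |A|=2c.
\]
Since $c$ appears in this characteristic sequence, $-c$ is, by Def.~\ref{de:centsym}, an entry of some Cartan matrix of $\cC_c$; moreover every off-diagonal entry of these Cartan matrices lies in $\{-1,-2,-c\}$, so the matrices are indecomposable, and any finite root system of type $\cC_c$ is automatically irreducible.

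What remains is to show that $\cC_c$ does admit a finite root system, and I would proceed by induction on $c\ge 2$ using Thm.~\ref{th:cycleclass}(3). For the base case $c=2$ the characteristic sequence is $(2,1)^2$ with $|A|=4$ and $c_1=2\in\{2,3\}$, so Thm.~\ref{th:cycleclass}(3) directly produces a finite root system. For $c\ge 3$ one has $c_1=c>1$, $c_3=2>1$, and $|A|=2c\ge 6$, so Thm.~\ref{th:cycleclass}(3) reduces the existence question to whether the Cartan scheme with characteristic sequence $(c-1,\,1,\,2,\ldots,2)^2$, which is precisely $\cC_{c-1}$, admits a finite root system; this holds by the inductive hypothesis.

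No serious obstacle is anticipated: the reduction step of Thm.~\ref{th:cycleclass}(3) is tailor-made so that the shape of the characteristic sequence is preserved after decrementing $c_1$ and deleting one tail $2$, and the recursion terminates exactly at $(2,1)^2$. The only routine checks are that $c_1$ of the characteristic sequence really corresponds to an off-diagonal Cartan matrix entry in the sense of Def.~\ref{de:centsym}, and that the hypotheses $c_1>1$, $c_3>1$, $|A|\ge 6$ of Thm.~\ref{th:cycleclass}(3) hold at every step of the recursion.
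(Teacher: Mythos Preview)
Your proof is correct and takes a genuinely different route from the paper's. The paper derives the corollary from Prop.~\ref{pr:entries}, whose sequences
\[
  (3,\underbrace{2,\dots ,2}_{n-2},1,2n+1,1,\underbrace{2,\dots ,2}_{n-2})^2
\]
were chosen to realise the sharp upper bound of Prop.~\ref{pr:bound}; odd values $c=2n+1$ appear in these sequences directly, and even values $c=2n$ appear in the intermediate reduction step of that proof. Your construction instead uses the much simpler family $(c,1,2,\dots,2)^2$ and a clean one-step induction via Thm.~\ref{th:cycleclass}(3), which handles all $c\ge 2$ uniformly (and incidentally also covers $c=1$, since the entry $1$ already occurs in every $\cC_c$). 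The paper's approach has the advantage of simultaneously showing that the bounds in Prop.~\ref{pr:bound} are sharp; your approach is more direct and self-contained for the corollary itself, at the cost of using roughly twice as many objects (your $\cC_c$ has $|A|=2c$, so $-c$ is well below the bound $|A|+1$). Both routes rest on the same tool, namely the reduction in Thm.~\ref{th:cycleclass}(3).
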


\begin{proof}
  For even $c$ use the appropriate intermediate step in the proof of
  Prop.~\ref{pr:entries}.
\end{proof}

\begin{corol}
  For $r,n\in\NN$, there are only finitely many
  finite root systems $\rsC $ of rank $r$ with $n$ objects.
\end{corol}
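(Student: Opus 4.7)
The plan is to reduce to the rank-two bound of Prop.~\ref{pr:bound}. Given $r,n\in \ndN $, I first observe that the finite root system of a given Cartan scheme (when one exists) is uniquely determined by the scheme, since the positive roots at any object $a$ are precisely the vectors of the form $w(\al _i)\in \ndN _0^I$ with $w\in \Hom (\cdot ,a)$ a morphism of $\Wg (\cC )$ and $i\in I$. It therefore suffices to bound, up to equivalence, the number of connected Cartan schemes $\cC =\cC (I,A,(\rfl _i)_{i\in I},(C^a)_{a\in A})$ with $|I|=r$, $|A|=n$ that admit a finite root system; the disconnected case splits into connected components, each with at most $n$ objects. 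The data of such a $\cC $ consist of (i)~the tuple $(\rfl _i)_{i\in I}$, lying in a set of cardinality at most $n^{rn}$, and (ii)~the tuple $(C^a)_{a\in A}$ of Cartan matrices. Thus the task reduces to bounding the off-diagonal entries of the $C^a$.

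For this I would pass to rank two. Fix $a\in A$ and distinct $i,j\in I$, and form the rank-two Cartan scheme $\cC _{ij}$ with index set $\{i,j\}$, object set $A$, maps $\rfl _i,\rfl _j$, and Cartan matrices $C^{b,ij}$ given by the $2\times 2$ submatrix of $C^b$ indexed by $\{i,j\}$, for each $b\in A$. Axioms (C1) and (C2) transfer directly. Setting $R^{b,ij}=R^b\cap (\ndZ \al _i+\ndZ \al _j)$ and using that $\s _i$ and $\s _j$ preserve the sublattice $\ndZ \al _i+\ndZ \al _j$, a routine verification shows that the $R^{b,ij}$ satisfy (R1)--(R4), so $\cC _{ij}$ admits a finite root system of rank two. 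Restricting further to the connected component of $\cC _{ij}$ containing $a$, which has at most $n$ objects, Prop.~\ref{pr:bound} (chain case, the weaker bound) yields $|c^a_{ij}|\le 2n+1$ whenever the component has at least two objects; when it has a single object the Cartan matrix is itself of finite type, so $c^a_{ij}\in \{0,-1,-2,-3\}$.

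Hence every off-diagonal entry of every $C^a$ is bounded in absolute value by $2n+1$, so only finitely many tuples $(C^a)_{a\in A}$ are possible. Combined with the bound on $(\rfl _i)_{i\in I}$, this produces finitely many Cartan schemes, and hence finitely many root systems of rank $r$ with $n$ objects. The only mildly subtle step is verifying that the rank-two restriction $\cC _{ij}$ carries an induced finite root system; this rests on the invariance of $\ndZ \al _i+\ndZ \al _j$ under $\s _i,\s _j$, which makes (R1)--(R4) restrict cleanly.
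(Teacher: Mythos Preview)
Your proof is correct and follows essentially the same route as the paper: restrict to rank two, apply Prop.~\ref{pr:bound} to bound the off-diagonal Cartan entries, and combine with the finiteness of choices for the $\rfl _i$ and the uniqueness of the root system for a given Cartan scheme. The paper simply cites the rank-two restriction $\rsC |_{\{i,j\}}$ from \cite[Def.\,4.1]{p-CH08} rather than reconstructing it inline, and invokes the uniqueness result from \cite{p-CH08} directly; your justification of uniqueness via ``positive roots are exactly the $w(\al _i)\in \ndN _0^I$'' implicitly uses that finite root systems have only real roots, which is also a result from \cite{p-CH08}.
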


\begin{proof}
  Let $I$, $A$ be finite sets with $|I|=r$ and $|A|=n$, and
  let $\rsC $ be a
  finite root system of rank $r$ with object set $A$.
  For all $i,j\in I$ with $i\not=j$ the restriction $\rsC |_{\{i,j\}}$, see
  \cite[Def.\,4.1]{p-CH08}, is a finite root system of rank two.
  Hence the entries of the
  Cartan matrices of $\rsC $ are bounded by $2|A|+1$ by Prop.~\ref{pr:bound}.
  Since for all $i\in I$, $\rfl _i$ is one of finitely many permutations of
  $A$, and since finite root systems are uniquely determined by their Cartan
  scheme, the claim is proven.
\end{proof}


\providecommand{\bysame}{\leavevmode\hbox to3em{\hrulefill}\thinspace}
\providecommand{\MR}{\relax\ifhmode\unskip\space\fi MR }
\providecommand{\MRhref}[2]{%
  \href{http://www.ams.org/mathscinet-getitem?mr=#1}{#2}
}
\providecommand{\href}[2]{#2}

\end{document}